\newcommand{\C}{\mathbb{C}}						
\newcommand{\Z}{\mathbb{Z}}						
\newcommand{\R}{\mathbb{R}}						
\renewcommand{\i}{\mathrm{i}}					
\newcommand{\e}{\mathrm{e}}						
\newcommand{\inv}{^{-1}}
\newcommand{\List}[1]{\{1,\ldots, #1\}}
\newcommand{\matrices}[1]{\mathcal{M}\left( #1 \right)}		
\newcommand{\diag}[1]{\mathrm{diag}\left( #1 \right)}		
\newcommand{\RHinf}{RH^\infty}								
\newcommand{\Hinfty}{H^\infty}								
\newcommand{\pstable}{\mathbf{P}}							
\newcommand{\error}{e}										
\newcommand{\control}{u}									
\newcommand{\measurement}{y}								
\newcommand{\reference}{\measurement_r}						
\newcommand{\disturbance}{d}								
\newcommand{\Plant}{P}										
\newcommand{\Cont}{C}										
\newcommand{\Gen}{\Theta}									
\newcommand{\gen}{\theta}									
\newcommand{\Closedloop}[1]{H( #1 )}						
\newcommand{\num}{N}										
\newcommand{\den}{D}										
\newcommand{\lnum}{\widetilde{\num}}						
\newcommand{\lden}{\widetilde{\den}}						
\newcommand{\outd}{n}										
\newcommand{\ind}{m}										
\newcommand{\stable}{\mathbf{R}}							
\newcommand{\primeideal}{\mathbf{Z}}						
\newcommand{\fractions}[1]{\mathbf{F}_{#1}}					
\newtheorem{theorem}{Theorem}
\newtheorem{lemma}{Lemma}
\newtheorem{corollary}{Corollary}
\theoremstyle{definition}
\newtheorem{definition}{Definition}
\newtheorem{remark}{Remark}
\newtheorem{example}{Example}
\newtheorem{design}{Design procedure}
\begin{document}

\title[A fractional representation approach to robust regulation]{A fractional representation approach to the robust regulation problem for MIMO systems}

\thispagestyle{plain}

\author{Petteri Laakkkonen}
\address{Unit of Computing Sciences, Faculty of Information Technology and Communication Sciences, Tampere University, PO. Box 692, 33014, Tampere, Finland.}
\email{petteri.laakkonen@tuni.fi}

\begin{abstract}
The aim of this paper is in developing unifying frequency domain theory for robust regulation of MIMO systems. The main theoretical results achieved are a new formulation of the internal model principle, solvability conditions for the robust regulation problem, and a parametrization of all robustly regulating controllers. The main results are formulated with minimal assumptions and without using coprime factorizations, thus guaranteeing their applicability with a very general class of systems. In addition to theoretical results, the design of robust controllers is addressed. The results are illustrated by two examples involving a delay and a heat equation.
\end{abstract}

\subjclass[2010]{%
93C05, 
93B25, 
93B52 
}
\keywords{Robust regulation, feedback control, control design, factorization approach} 

\maketitle

\setcitestyle{numbers,empty,super}

\section{Introduction}


Controlling behavior of infinite-dimensional systems, e.g., systems described by partial differential equations or time-delay systems, is of great interest in many applications. This paper studies the frequency domain formulation of the control problem where a dynamic controller is to be found so that the output $y(t)$ of the system asymptotically converges to the given reference signal $\reference(t)$, i.e., $\|y(t)-\reference(t)\|\to 0$ as $t\to\infty$.
The controllers achieving asymptotic convergence are said to be regulating. In addition, the controller is required to work despite small perturbations of the plant. 
This property is called robustness and it is important in real world control applications since the related system models, the controller design procedures, testing feasibility of the controller by simulations, and finally implementing the controllers in practice unavoidably involve some inaccuracies.\cite{Williametal2002} 
The problem of finding a regulating controller that is robust to small perturbations is called the robust regulation problem. 
As explained by Paunonen and Pohjolainen\cite{PaunonenPohjolainen2010}, the robust regulation problem can be divided into two equally important parts: the robust stabilization and the robust regulation. In the stabilization part one is interested in finding a controller such that stability of the closed loop is maintained despite small perturbations of the plant. It involves the topological aspects of the problem, and it has been studied in the algebraic setting\cite{BallSasane2012,Quadrat2015,Vidyasagaretal1982} as well as in many physically relevant algebraic structures\cite{CurtainZwart1995, Vidyasagar}. This paper focus on the regulation part where one is interested in finding conditions under which stability implies regulation. This characterization of robust regulation has been used for example by Paunonen and Pohjolainen\cite{PaunonenPohjolainen2010} in the time domain and by Callier and Desoer\cite{CallierDesoer1980} in the frequency domain.



Regulation of systems modelled with ordinary differential equations achieved considerable attention in the 1970's. \cite{AntsaklisPearson1978, Davison1976a, DavisonGoldenberg1975, FrancisWonham1976, FrancisWonham1975a} The results have been generalized to infinite-dimensional systems since then. \cite{Pohjolainen1985, YamamotoHara1988, Byrnesetal2000, HamalainenPohjolainen2000, RebarberWeiss2003, ImmonenPhd, PaunonenPohjolainen2014, LaakkonenPohjolainen2015, Humalojaetal2019} A remarkably important result called the internal model principle was given by Francis and Wonham\cite{FrancisWonham1975a} and by Davison\cite{Davison1976a}. It states that all robustly regulating controllers must contain an internal model, i.e., a suitably reduplicated copy, of the unstable dynamics of the reference signals. The internal model principle has multiple different time domain characterizations.\cite{ImmonenPhd, PaunonenPohjolainen2014} A frequency domain formulation of the internal model principle and solvability conditions for the robust regulation problem were given by Vidyasagar\cite{Vidyasagar} for rational transfer functions. These results were later generalized to specific classes of transfer functions suitable for infinite-dimensional systems\cite{YamamotoHara1988, LaakkonenPohjolainen2015} and by using the fractional representation approach \cite{NettThesis, LaakkonenQuadrat2017}. Frequency domain methods for designing regulating controllers have also been considered by several authors.\cite{HamalainenPohjolainen2000, RebarberWeiss2003, LaakkonenPohjolainen2015, Ylinenetal2006}


In this paper, the robust regulation problem is studied using the fractional representation approach. \cite{Vidyasagar,Desoeretal1980} Fractional representations have two benefits. First, fractional representations allow considerations to be done only assuming that stable SISO 
transfer functions form a commutative ring $\stable$ with no zero divisors. Within this general algebraic framework regulation simply means that the error $e=y-\reference$ between the output of the system and the reference is a stable vector, i.e., a vector with elements in the ring $\stable$. 
The posed natural algebraic conditions are valid for most classes of transfer functions.\cite{LaakkonenPohjolainen2015, Logemann1993,  PekarProkop2017} Consequently, the results of this paper provide a simple framework to study robust regulation that is applicable in a wide range of systems including, e.g., finite-dimensional, distributed parameter, and time-delay systems. This is crucial since the suitable choice of the ring $\stable$ depend on the properties of the system and the unstable dynamics to be regulated.
Secondly, fractional representations allow parametrizing all stabilizing controllers using no coprime factorizations.\cite{Quadrat2006} This has an instrumental role in the proofs of the main results in this paper and allow formulating them without coprime factorizations. This is an advantage since the coprime factorizations of infinite-dimensional systems are not easy to construct in general,
they may not exist\cite{PekarProkop2017, Logemann1987}, or the existence may be unknown\cite{Laakkonen2016}.


The main achievement of this paper is in generalizing several classical frequency domain results on robust regulation of systems described by rational transfer matrices presented by Vidyasagar\cite{Vidyasagar}. The main theoretical results are a new formulation of the internal model principle, conditions for the existence of a robustly regulating controller, and a parametrization of all robust controllers. Causality is considered by using the stability results by Mori and Abe\cite{MoriAbe2001}. The main results extend the existing ones that are specific to some ring $\stable$ and use the coprime factorizations\cite{Vidyasagar, YamamotoHara1988, LaakkonenPohjolainen2015, NettThesis} to the abstract algebraic setting using no coprime factorizations, thus providing a unifying theory for robust regulation in the frequency domain. The theoretical results of this paper extend the frequency domain results of SISO 
systems presented by Laakkonen and Quadrat\cite{LaakkonenQuadrat2017} to MIMO 
systems. Unlike with the SISO systems, regulation does not imply robustness with the MIMO systems making the generalization of the results nontrivial. The results of this paper show how the $p$-copy internal model for time domain system\cite{PaunonenPohjolainen2010, FrancisWonham1975a} can be understood within the general framework adopted in this paper.
MIMO 
formulation of the internal model principle in the general algebraic framework was first considered in the conference paper by the author. \cite{Laakkonen2017} The solvability of the robust regulation problem, parametrization of robust controllers, or the controller design were not addressed. This paper extends the preliminary results of Laakkonen\cite{Laakkonen2017} by introducing a new reformulation of the internal model principle and discussing the results and presenting their proofs in greater detail. In particular, the sufficiency part of the internal model principle now address also robustness whereas the original proof only stated that the internal model implies regulation. The preliminary results\cite{LaakkonenQuadrat2017, Laakkonen2017} do not take causality into account, so the results addressing it are all new.

In addition to theoretical results, several controller design procedures related to the given existence results are proposed. They generalize the ideas for constructing robustly regulating controllers \cite{Vidyasagar, LaakkonenPohjolainen2015, Laakkonen2016} to the general algebraic framework. In addition, a new method of constructing the internal model one element at a time is proposed. It allows revising an already existing controller by including additional parts into its internal model thus extending the class of regulated signals.

Two examples are given to illustrate the proposed controller design procedures and theoretical results. The first example involves a reference signal with an infinite number of unstable poles making the design procedure complicated. This demonstrates how the choice of the ring $\stable$ depends on the problem at hand and underlines the importance of the general approach. In the second example with one dimensional heat equation, 
it is shown that one may be able to carry out the design procedure using approximations of the plant transfer matrix. 
This way one does not need to find the closed form of the plant transfer matrix which is a considerable benefit. In addition, the controller of this example is easily verified to be causal even though the controller design method does not directly imply causality. This shows that the results presented in this paper that do not directly imply causality are also relevant and interesting.

The remaining part of this paper is structured as follows. The preliminary definitions, notations and stability results are introduced in Section \ref{sec:Preliminaries}. The problem formulation is given in Section \ref{sec:Problem}. Section \ref{sec:IMP} is devoted to the internal model principle.
In Section \ref{sec:Simplification}, simplification of the internal model is discussed in term of fractional ideals.
Solvability of the robust regulation problem is studied in Section \ref{sec:Solv}. 
In addition, controller design is addressed and 
a parametrization of all robustly regulating controllers is proposed. The theoretical results and design procedures are illustrated by two examples in Section \ref{sec:Example}. Finally, the obtained results are summarized and discussed in Section \ref{sec:Conclusions}.


\section{Notations and Preliminary Results}\label{sec:Preliminaries}

The set of stable causal SISO 
transfer functions is denoted by  $\stable$ and together with the summation $+$ and multiplication $\cdot$ operations it is assumed to be an integral domain, i.e., a commutative ring with no zero divisors.
The following integral domains appear in the examples. 
The Hardy space of bounded holomorphic functions in the right half plane $\C_+=\{s\in\C\, |\, \mathrm{Re}(s)>0 \}$ is denoted by $\Hinfty$. The set of all real rational functions and its subset of proper rational functions having no poles in $\C_+$ are denoted by $\R(s)$ and $\RHinf$, respectively. The integral domain $\pstable$ consists of functions $f(s)$ that are analytic and bounded in every right-half plane $\C_\alpha=\{s\in \C\, |\, \mathrm{Re}(s)>\alpha \}$ with $\alpha>0$ and polynomially bounded on the imaginary axis, i.e., $|f(\i\omega)|<M|\omega|^k$ for some $M,k>0$.\cite{LaakkonenPohjolainen2015} This integral domain corresponds to polynomial stability in the time domain. \cite{PaunonenLaakkonen2015}

The additive and multiplicative identities of $\stable$ are denoted by $0$ and $1$, respectively. Invertible elements of $\stable$ are called units. A set $A\subseteq \stable$ is an ideal of $\stable$, if it is an additive subgroup of $\stable$ and $ab\in A$ whenever $a\in A$ and $b\in\stable$. An ideal $A$ is prime if $A\neq \stable$ and $a\in A$ or $b\in A$ if $ab\in A$. The field of fractions of $\stable$ is denoted by $\fractions{\stable}$. The $\stable$-module $f_1 \stable+\cdots+f_n\stable$, where $f_1,\ldots,f_k\in\fractions{\stable}$, is denoted by $\langle f_1,\ldots,f_k\rangle$ or $\langle f_i\, |\, i=1,\ldots, k\rangle$.

\begin{definition}\begin{enumerate}
\item An $\stable$-submodule $J$ of $\fractions{\stable}$ is called \emph{a fractional ideal} if there exists $0\neq a\in\stable$ such that $a J\subseteq\stable$.
\item A fractional ideal $J$ is \emph{finitely generated} if $J=\langle f_1,\ldots,f_k\rangle$ for some elements $f_1,\ldots,f_k\in\fractions{\stable}$ and it is
\emph{principal} if it is generated by a single element, i.e., $J=\langle f\rangle$ for some $f\in\fractions{\stable}$.
\end{enumerate}

\end{definition}

A matrix $H$ with elements $\gen_{ij}$ on the $i$th row and $j$th column is denoted by $H=(\gen_{ij})$ and its transpose is denoted by $H^T$. The set of all matrices with elements in a set $S$ is denoted by $\matrices{S}$ and the set of all $n\times m$ matrices by $S^{n\times m}$. The set of $n$-dimensional vectors with elements in $S$ is denoted by $S^n$.

The plant and the controller are assumed to be matrices over the field of fractions $\fractions{\stable}$. The control configuration considered in this article is depicted in Fig. \ref{fig:Closedloop}. 
The resulting closed loop transfer matrix from $(\reference,\disturbance)$ to $(\error,\control)$ is
\begin{eqnarray}\label{eq:ClosedLoop}
\Closedloop{\Plant,\Cont}  :=
\begin{bmatrix}
\left(I-\Plant\Cont\right)\inv  & \left(I-\Plant\Cont\right)\inv \Plant\\
\Cont\left(I-\Plant\Cont\right)\inv  &\left(I-\Cont\Plant\right)\inv 
\end{bmatrix}\in\matrices{\fractions{\stable}}.
\end{eqnarray}
The transfer functions $(I-\Plant\Cont)\inv$ and $(I-\Plant\Cont)\inv \Plant$ are called the \emph{sensitivity matrix} and the \emph{load disturbance sensitivity matrix}, respectively.

\begin{figure}[ht]
\centering
\begin{overpic}[scale=.9]{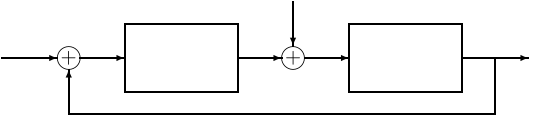}
\put(3,13){$\reference$}
\put(17,13){$\error$}
\put(32,9){$\Cont$}
\put(59,13){$\control$}
\put(53,23){$\disturbance$}
\put(74.5,9){$\Plant$}
\put(91,13){$\measurement$}
\end{overpic}
\caption{The error feedback control configuration.}
\label{fig:Closedloop}
\end{figure}

\begin{remark}
The choice $e=y+\reference$ is made in Fig. \ref{fig:Closedloop} instead of the more intuitive $e=y-y_r$. It does not restrict the generality, and one can avoid some technical difficulties this way because the closed loop is then symmetric with respect to $\Plant$ and $\Cont$.
\end{remark}

\begin{definition}
\begin{enumerate}\label{def:Stability}
\item A matrix or a vector $H\in\matrices{\fractions{\stable}}$ is \emph{stable} if $H\in\matrices{\stable}$, and otherwise it is unstable.

\item A controller $\Cont\in\fractions{\stable}^{m\times n}$ \emph{stabilizes} $\Plant\in\fractions{\stable}^{n\times m}$ if the closed loop transfer matrix \eqref{eq:ClosedLoop} is stable.
\end{enumerate}
\end{definition}

\begin{remark}\label{rem:IOStability}
If $\stable=\Hinfty$, then Definition \ref{def:Stability} of stability means that an input signal in the Lebesgue space $L^2(0,\infty,\C^\ind)$ produces $L^2(0,\infty,\C^\outd)$-output signal in the time-domain. Furthermore, under the standard assumptions that the system operator of the state-space representation generates a strongly stable semigroup, this means that the output converges to zero. \cite{Oostveen2000}
\end{remark}

\begin{definition}
\begin{enumerate}
\item 
The representation $H=\num\den\inv $ ($H=\lden\inv \lnum$) is called \emph{a right (left) factorization} of $H$ if $\num,\den\in\matrices{\stable}$ ($\lnum,\lden \in\matrices{\stable}$) and $\det(\den)\neq 0$ ($\det(\lden)\neq 0$).
\item 
The factorization $H=\num\den\inv$ ($H=\lden\inv \lnum$) 
is 
\emph{right (left) weakly coprime} if for any $X\in\matrices{\fractions{\stable}}$ ($\widetilde{X}\in\matrices{\stable}$) of suitable size one has
\begin{equation*}
\begin{bmatrix}
N\\
D
\end{bmatrix}X\in\matrices{\stable}\Rightarrow X\in\matrices{\stable} \qquad \left(\widetilde{X}\begin{bmatrix}
\lnum &
\lden
\end{bmatrix}\in\matrices{\stable}\Rightarrow \widetilde{X}\in\matrices{\stable}\right).
\end{equation*}
\item 
The factorization $H=\num\den\inv $ ($H=\lden\inv \lnum$) 
is 
\emph{right (left) coprime}
if there exist $X,Y\in\matrices{\stable}$ ($\widetilde{X},\widetilde{Y}\in\matrices{\stable}$) such that
$
X\den-Y\num=I$ $(\lden\widetilde{X} - \lnum\widetilde{Y} =I).
$
\end{enumerate}
\end{definition}

Any right (left) coprime factorization is a weakly right (left) coprime factorization. It follows that the results assuming weakly coprime factorizations are valid if the factorization is coprime. In general, weakly coprime factorizations need not be coprime. However, a weakly right (left) coprime factorization of a stabilizing controller $\Cont$ or a stabilizable plant $\Plant$ is right (left) coprime.\cite{Quadrat2006b}

In what follows the stability results given in the next theorem are used extensively. The first item is Theorem 3 of Quadrat \cite{Quadrat2006} reformulated using Proposition 4 of the same article. It gives a parametrization of all stabilizing controllers. The second part is obtained from the first one by changing the roles of $\Plant$ and $\Cont$. It holds by the symmetry of the closed loop control configuration of Fig. \ref{fig:Closedloop}.
\begin{theorem}
\label{thm:Stability}
Let $\Cont\in\fractions{\stable}^{m\times n}$ 
stabilize $\Plant\in\fractions{\stable}^{n\times m}$.
\begin{enumerate}[1.]
\item Denote
\begin{eqnarray*}
\widetilde{L} :=\begin{bmatrix}
-\left(I-\Cont\Plant\right)\inv \Cont\;\; & \left(I-\Cont\Plant\right)\inv
\end{bmatrix}, 
\quad \text{and}\quad
L :=\begin{bmatrix}
\left(I-\Plant\Cont\right)\inv \\
\Cont\left(I-\Plant\Cont\right)\inv
\end{bmatrix}. 
\end{eqnarray*}
All stabilizing controllers of $\Plant$ are parametrized by
\begin{subequations}\label{eq:AllStabilizingControllers}
\begin{eqnarray}
\Cont(W) & =\left(\Cont\left(I-\Plant\Cont\right)\inv +\widetilde{L}WL\right)
\left(\left(I-\Plant\Cont\right)\inv +\Plant \widetilde{L}WL\right)\inv \\
& =\left(\left(I-\Cont\Plant\right)\inv +\widetilde{L}WL\Plant\right)\inv 
\left(\left(I-\Cont\Plant\right)\inv\Cont +\widetilde{L}WL\right)
\end{eqnarray}
\end{subequations}
where the stable matrix $W\in\stable^{(\ind+\outd)\times (\ind+\outd)}$ is such that $\det\left(\left(I-\Plant\Cont\right)\inv +\Plant \widetilde{L}WL\right)\neq 0$ and
$\det\left(\left(I-\Cont\Plant\right)\inv +\widetilde{L}WL\Plant\right)\neq 0.$

\item Denote
\begin{eqnarray*}
\widetilde{M} :=\begin{bmatrix}
-\left(I-\Plant\Cont\right)\inv \Plant\;\; & \left(I-\Plant\Cont\right)\inv
\end{bmatrix}, 
\quad \text{and}\quad
M :=\begin{bmatrix}
\left(I-\Cont\Plant\right)\inv \\
\Plant\left(I-\Cont\Plant\right)\inv 
\end{bmatrix}. 
\end{eqnarray*}
All plants that $\Cont$ stabilizes are parametrized by
\begin{subequations}
\begin{eqnarray}\label{eq:AllStabilizedPlants}
\Plant(X) & =\left(\Plant\left(I-\Cont\Plant\right)\inv +\widetilde{M}XM\right)
\left(\left(I-\Cont\Plant\right)\inv +\Cont \widetilde{M}XM\right)\inv \\
& =\left(\left(I-\Plant\Cont\right)\inv +\widetilde{M}XM\Cont\right)\inv 
\left(\left(I-\Plant\Cont\right)\inv\Plant +\widetilde{M}XM\right)
\end{eqnarray}
\end{subequations}
where the stable matrix $X\in\stable^{(\ind+\outd)\times (\ind+\outd)}$ is such that $\det\left(\left(I-\Cont\Plant\right)\inv +\Cont \widetilde{M}XM\right)\neq 0$ and $\det\left(\left(I-\Plant\Cont\right)\inv +\widetilde{M}XM\Cont\right)\neq 0.$
\end{enumerate}
\end{theorem}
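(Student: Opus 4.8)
The plan is to obtain item~1 by translating the known parametrization of all stabilizing controllers into the present notation, and then to derive item~2 purely by symmetry.

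For item~1, my approach would be to invoke Theorem~3 of \cite{Quadrat2006}, which parametrizes all controllers stabilizing a given stabilizable plant $\Plant$ once one stabilizing controller $\Cont$ is known. That result is stated in terms of the closed-loop operators associated with the pair $(\Plant,\Cont)$, so the task is to rewrite it in the explicit closed form \eqref{eq:AllStabilizingControllers}. First I would read off from \eqref{eq:ClosedLoop} the four stable blocks of $\Closedloop{\Plant,\Cont}$ and assemble the row $\widetilde{L}$ and the column $L$ from them. Then, applying Proposition~4 of \cite{Quadrat2006}, which expresses the free Youla-type parameter in terms of these blocks, I would substitute to reach the stated formula for $\Cont(W)$. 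The two equivalent expressions (a) and (b) should follow from a single algebraic identity interchanging the left and right representations of $\Cont(W)$; concretely, I would verify this using only the resolvent identities $\Cont(I-\Plant\Cont)\inv=(I-\Cont\Plant)\inv\Cont$ and $(I-\Plant\Cont)\inv\Plant=\Plant(I-\Cont\Plant)\inv$, together with the admissibility condition that the relevant determinant be nonzero.

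For item~2, the plan is to exploit the symmetry of the configuration in Fig.~\ref{fig:Closedloop}. Writing out $\Closedloop{\Cont,\Plant}$ explicitly and applying the same two resolvent identities, one sees that it contains exactly the four entries of $\Closedloop{\Plant,\Cont}$, merely relocated among the blocks (the diagonal sensitivity blocks exchange, as do the two off-diagonal blocks). Hence $\Closedloop{\Plant,\Cont}\in\matrices{\stable}$ if and only if $\Closedloop{\Cont,\Plant}\in\matrices{\stable}$, so $\Cont$ stabilizes $\Plant$ precisely when $\Plant$ stabilizes $\Cont$; equivalently, the set of plants stabilized by $\Cont$ coincides with the set of controllers stabilizing the ``plant'' $\Cont$. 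I would then apply item~1 with the roles of $\Plant$ and $\Cont$ interchanged: the substitution $\Plant\leftrightarrow\Cont$ carries $\widetilde{L}\mapsto\widetilde{M}$ and $L\mapsto M$, and renames the free parameter $W\mapsto X$, turning \eqref{eq:AllStabilizingControllers} into \eqref{eq:AllStabilizedPlants}, with the admissibility conditions on $X$ being the images of those on $W$ under the same swap (note $\ind+\outd=\outd+\ind$, so the parameter sizes match).

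The main obstacle I anticipate is the reformulation step in item~1: matching Quadrat's abstract parameter to the explicit blocks $\widetilde{L}$ and $L$, and verifying that the two displayed forms of $\Cont(W)$ genuinely coincide. The symmetry argument for item~2 is conceptually transparent, but I would take care to check that the block permutation relating $\Closedloop{\Cont,\Plant}$ and $\Closedloop{\Plant,\Cont}$ preserves stability of all four entries simultaneously and that the nondegeneracy conditions transform consistently under $\Plant\leftrightarrow\Cont$.
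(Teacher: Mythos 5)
Your proposal matches the paper's own justification: the paper proves item 1 by citing Theorem 3 of Quadrat (2006) reformulated via Proposition 4 of the same article, and obtains item 2 from item 1 by interchanging the roles of $\Plant$ and $\Cont$ using the symmetry of the closed-loop configuration, exactly as you describe. Your additional checks (the resolvent identities showing the four blocks of $\Closedloop{\Cont,\Plant}$ are a permutation of those of $\Closedloop{\Plant,\Cont}$, and the consistency of the nondegeneracy conditions under the swap) are sound elaborations of the same argument.
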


Next two lemmas are given for later use. The first one is Theorem 5.3.10 of Vidyasagar\cite{Vidyasagar}. 
The original proof of the theorem uses coprime factorizations, but it can easily be extended to the more general setting of this paper.

\begin{lemma}\label{lem:TwoStageStab}
If $\Cont_s$ stabilizes $\Plant$ and $\Cont_0$ stabilizes $\Plant_0=\Plant(I-\Cont_s\Plant)\inv$, then $\Cont_s+\Cont_0$ stabilizes $\Plant$.
\end{lemma}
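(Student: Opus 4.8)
The plan is to verify directly that every block of the closed loop $\Closedloop{\Plant,\Cont_s+\Cont_0}$ lies in $\matrices{\stable}$, exploiting the factorizations that appear when the feedback is split into its two stages. Writing $\Cont=\Cont_s+\Cont_0$ and using the push-through identity $\Plant_0=\Plant(I-\Cont_s\Plant)\inv=(I-\Plant\Cont_s)\inv\Plant$, I would first establish the two key factorizations
\begin{align*}
I-\Plant\Cont=(I-\Plant\Cont_s)(I-\Plant_0\Cont_0),\qquad I-\Cont\Plant=(I-\Cont_0\Plant_0)(I-\Cont_s\Plant).
\end{align*}
Each factor on the right is invertible over $\fractions{\stable}$ because $\Cont_s$ stabilizes $\Plant$ and $\Cont_0$ stabilizes $\Plant_0$, so the associated closed loops (and hence these inverses) are well defined. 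Consequently $\Closedloop{\Plant,\Cont}$ is well defined, and inverting the factorizations gives $(I-\Plant\Cont)\inv=(I-\Plant_0\Cont_0)\inv(I-\Plant\Cont_s)\inv$ together with the analogous expression for $(I-\Cont\Plant)\inv$.

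With these in hand, three of the four blocks are immediate. Abbreviating $S_s=(I-\Plant\Cont_s)\inv$ and $S_0=(I-\Plant_0\Cont_0)\inv$, the sensitivity block equals $S_0S_s$, a product of the $(1,1)$ blocks of $\Closedloop{\Plant,\Cont_s}$ and $\Closedloop{\Plant_0,\Cont_0}$, hence stable; the load disturbance block $(I-\Plant\Cont)\inv\Plant=S_0S_s\Plant=S_0\Plant_0$ is exactly the $(1,2)$ block of $\Closedloop{\Plant_0,\Cont_0}$; and $(I-\Cont\Plant)\inv=(I-\Cont_s\Plant)\inv(I-\Cont_0\Plant_0)\inv$ is a product of $(2,2)$ blocks of the two stable closed loops. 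Each of these therefore lies in $\matrices{\stable}$.

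The only block requiring real work is the $(2,1)$ block $\Cont(I-\Plant\Cont)\inv=\Cont S_0S_s$, and this is where I expect the main obstacle: since $\Cont_s$ need not itself be stable, one cannot simply multiply it against stable matrices. The remedy is to split $\Cont=\Cont_s+\Cont_0$ and treat the two terms separately. The term $\Cont_0S_0S_s$ is stable at once, since $\Cont_0S_0=\Cont_0(I-\Plant_0\Cont_0)\inv$ is the $(2,1)$ block of $\Closedloop{\Plant_0,\Cont_0}$ and $S_s$ is stable. For $\Cont_sS_0S_s$ I would use the two stable combinations that $\Cont_s$ \emph{does} form, namely $\Cont_sS_s=\Cont_s(I-\Plant\Cont_s)\inv$ (the $(2,1)$ block of $\Closedloop{\Plant,\Cont_s}$) and $\Cont_s\Plant_0=(I-\Cont_s\Plant)\inv-I$, which is stable because $(I-\Cont_s\Plant)\inv$ is the $(2,2)$ block of $\Closedloop{\Plant,\Cont_s}$. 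Expanding $S_0=I+\Plant_0\Cont_0S_0$ gives $\Cont_sS_0=\Cont_s+(\Cont_s\Plant_0)(\Cont_0S_0)$, in which the unstable $\Cont_s$ is isolated while the remaining term is a product of two stable matrices; multiplying by $S_s$ then yields $\Cont_sS_0S_s=\Cont_sS_s+(\Cont_s\Plant_0)(\Cont_0S_0)S_s$, so the unstable piece is absorbed into the stable block $\Cont_sS_s$.

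Putting the pieces together, all four blocks of $\Closedloop{\Plant,\Cont_s+\Cont_0}$ lie in $\matrices{\stable}$, which is precisely the statement that $\Cont_s+\Cont_0$ stabilizes $\Plant$. The whole argument uses only the closed loop formula \eqref{eq:ClosedLoop}, the push-through identities, and the hypotheses that $\Closedloop{\Plant,\Cont_s}$ and $\Closedloop{\Plant_0,\Cont_0}$ are stable; no coprime factorization enters, which is exactly why Vidyasagar's coprime-factorization proof extends verbatim in spirit to the present algebraic setting.
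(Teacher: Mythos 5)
Your proof is correct and rests on the same key manipulation as the paper's: factoring $I-\Plant\Cont$ (resp.\ $I-\Cont\Plant$) through the two stages and expanding $(I-\Plant_0\Cont_0)\inv$ as $I$ plus a stable remainder so that the unstable $\Cont_s$ is absorbed into the stable block $\Cont_s(I-\Plant\Cont_s)\inv$. The only difference is presentational: the paper patches Vidyasagar's coprime-factorization argument at the single problematic term $(I-(\Cont_s+\Cont_0)\Plant)\inv\Cont_s$ and defers the remaining blocks to that reference, whereas you verify all four blocks of $\Closedloop{\Plant,\Cont_s+\Cont_0}$ directly, which makes your version self-contained.
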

\begin{proof}
The proof of the lemma is similar to the original proof by Vidyasagar \cite{Vidyasagar}. The only change required is to replace the arguments using coprime factorizations that show stability of $(I-(\Cont_s+\Cont_0)\Plant)\inv\Cont_s$. To this end, observe that $(I-\Cont_0\Plant_0)\inv=I+\Cont_0(I-\Plant_0\Cont_0)\inv\Plant_0$, so
\begin{eqnarray*}
(I-(C_s+C_0)\Plant)\inv \Cont_s & =& (I-\Cont_s\Plant)\inv(I-\Cont_0\Plant_0)\inv \Cont_s\\
& =& (I-\Cont_s\Plant)\inv \Cont_s+(I-\Cont_s\Plant)\inv\Cont_0(I-\Plant_0\Cont_0)\inv\Plant_0\Cont_s.
\end{eqnarray*}
This shows the claim since $(I-\Cont_s\Plant)\inv\Cont_s, \Cont_0(I-\Plant_0\Cont_0)\inv, \Plant_0\Cont_s\in\matrices{\stable}$ by the assumptions that $\Cont_s$ stabilizes $\Plant$ and $\Cont_0$ stabilizes $\Plant_0$. 
\end{proof}

\begin{lemma}\label{lem:exInvA}

If $X,Y,Z\in\matrices{\stable}$ and $\gen\in\fractions{\stable}$ are such that $I=\gen X-YZ\in\fractions{\stable}^{n\times n}$, then there exist $\widetilde{X}, \widetilde{Z}\in\matrices{\stable}$ such that $\widetilde{X}$ is invertible over $\fractions{\stable}$ and $I=\gen\widetilde{X}-Y\widetilde{Z}$.
\end{lemma}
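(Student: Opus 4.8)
The plan is to obtain $\widetilde{X}$ from $X$ by a perturbation that preserves the identity $I=\gen\widetilde{X}+Y\widetilde{Z}$ while forcing the new matrix to be invertible over $\fractions{\stable}$. If $\gen=0$ the claim is trivial, since then $I=YZ$ and one may take $\widetilde{Z}=Z$ and $\widetilde{X}=I$. So assume $\gen\neq 0$ and write $\gen=a/b$ with $a,b\in\stable$ and $b\neq 0$. For an arbitrary $V\in\matrices{\stable}$ of the appropriate size I would set
\begin{equation*}
\widetilde{X}:=X+bYV,\qquad \widetilde{Z}:=Z-aV .
\end{equation*}
Both matrices lie in $\matrices{\stable}$, and since $\gen b=a$ a direct computation gives $\gen\widetilde{X}+Y\widetilde{Z}=\gen X+YZ+(\gen b-a)YV=\gen X+YZ=I$ for every choice of $V$. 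Thus everything reduces to choosing $V\in\matrices{\stable}$ with $\det(X+bYV)\neq 0$.

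First I would record that the hypothesis forces $[X\;Y]$ to have full row rank over the field $\fractions{\stable}$: indeed
\begin{equation*}
[X\;Y]\begin{bmatrix}\gen I\\ Z\end{bmatrix}=\gen X+YZ=I,
\end{equation*}
so $[X\;Y]$ admits a right inverse and its $n$ rows are linearly independent. Equivalently, the columns of $X$ together with the columns of $Y$ span $\fractions{\stable}^n$.

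The conceptual heart of the argument is the purely linear-algebraic claim that there exists $M_0$ over $\fractions{\stable}$ for which $X+YM_0$ is invertible. To see this, let $\mathcal{C}\subseteq\fractions{\stable}^n$ be the column space of $Y$ and observe that the $j$th column of any matrix $X+YM$ has the form $x_j+c_j$, where $x_j$ is the $j$th column of $X$ and $c_j\in\mathcal{C}$ may be chosen freely and independently of the other columns. Since $\spa\{x_1,\ldots,x_n\}+\mathcal{C}=\fractions{\stable}^n$, passing to the quotient $\fractions{\stable}^n/\mathcal{C}$ one selects the indices whose columns already project to a basis (fixing $c_j=0$ there), which yields a subspace $U$ complementary to $\mathcal{C}$; for the remaining columns one then chooses $c_j\in\mathcal{C}$ so that their $\mathcal{C}$-components run through a basis of $\mathcal{C}$. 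The resulting $n$ columns form a basis of $\fractions{\stable}^n$, so $X+YM_0$ is invertible for the corresponding $M_0$.

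Finally I would convert this field-level witness into the required stable $V$. Writing $p(V):=\det(X+bYV)$ as a polynomial in the entries of $V$ with coefficients in $\stable$, the value $p(M_0/b)=\det(X+YM_0)\neq 0$ shows that $p$ is not the zero polynomial. Because $\stable$ is an infinite integral domain, a nonzero polynomial cannot vanish at every $V$ of the appropriate size with entries in $\stable$, so some admissible $V\in\matrices{\stable}$ satisfies $p(V)\neq 0$; then $\widetilde{X}=X+bYV$ is invertible over $\fractions{\stable}$ and $\widetilde{Z}=Z-aV$ finishes the proof. The main obstacle I anticipate is precisely the linear-algebra claim together with the passage from the $\fractions{\stable}$-witness $M_0$ to a genuine stable $V$: the admissible perturbations are restricted to the form $bYV$ with $V\in\matrices{\stable}$, and it is the infinitude of $\stable$ that makes this genericity argument go through.
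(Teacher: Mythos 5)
Your proof is correct, but it takes a genuinely different route from the paper's. The paper also perturbs $X$ inside the column space of $Y$ scaled by the denominator $d$ of $\gen=\frac{n}{d}$, but it does so much more concretely: it extends a basis of the column space of $X$ to a basis of $\fractions{\stable}^n$ using actual columns of the stable matrix $YZ$, and then sets $\widetilde{X}=X+dYZM$ where $M$ is a $0$--$1$ selection matrix picking out exactly those columns (with $\widetilde{Z}=Z-nZM$). Because the added vectors are columns of $YZ$ itself rather than arbitrary elements of the column space of $Y$, the perturbation is automatically stable and no genericity argument is needed. Your version instead produces a witness $M_0$ over the field $\fractions{\stable}$ and then pulls it back to $\matrices{\stable}$ via non-vanishing of the polynomial $\det(X+bYV)$; this works, but it invokes the extra fact that $\stable$ is infinite, which the paper never lists among its hypotheses (it only assumes $\stable$ is an integral domain). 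The assumption is harmless --- a finite integral domain is a field, in which case $\fractions{\stable}=\stable$ and the lemma is immediate --- but you should either say so explicitly or replace the genericity step by the paper's explicit column selection, which sidesteps the issue and is arguably more elementary. What your formulation buys is a clean reduction of the whole lemma to the single statement ``find $V\in\matrices{\stable}$ with $\det(X+bYV)\neq 0$,'' and a perturbation that ranges over the full column space of $Y$ rather than only the columns of $YZ$.
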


\begin{proof}

The columns of $X$ and $YZ$ together span $\fractions{\stable}^{n}$. Thus, there exists a basis $\{x_1,\ldots,x_k,h_{k+1},\ldots,h_n\}$ of $\fractions{\stable}^{n}$ where $x_1,\ldots,x_k$ are columns of $X$ forming the basis of its column space and $h_{k+1},\ldots,h_n$ are columns of $YZ$. For notational simplicity assume that $x_1,\ldots,x_k$ are the first $k$ columns of $X$. Denote $\gen=\frac{n}{d}$ where $n,d\in\stable$. Choose a matrix $M$ that selects the columns $h_{k+1},\ldots,h_{n}$ of $YZ$ so that
$$
X+dYZM=\begin{bmatrix}
x_1,\ldots,x_k,x_{k+1}+dh_{k+1},\ldots,x_n+dh_n
\end{bmatrix}\in\matrices{\stable}.
$$
The columns of $X+dYZM$ are linearly independent and consequently it is invertible over $\fractions{\stable}^{n\times n}$. Furthermore,
\begin{eqnarray*}
I = \gen (X+dYZM)-Y(nZM+Z)
\end{eqnarray*}
which shows the claim.
\end{proof}

Causality is the natural constraint that a system cannot depend on the future inputs. This can be considered in the chosen framework by following the approach by Vidyasagar, Schneider and Francis\cite{Vidyasagaretal1982}. To this end, a prime ideal $\primeideal$ of $\stable$ is fixed for the remaining part of this paper, and it is assumed that any causal transfer function has a factorization whose denominator is not in $\primeideal$. This leads to the following definition.

\begin{definition}
\begin{enumerate}\label{def:Causality}
\item A transfer function $p\in\fractions{\stable}$ is \emph{causal} if it has a factorization $p=\frac{n}{d}$ such that $n\in\stable$ and $d\in\stable\setminus\primeideal$.
		
\item A transfer function $p\in\fractions{\stable}$ is \emph{strictly causal} if it has a factorization $p=\frac{n}{d}$ such that $n\in\primeideal$ and $d\in\stable\setminus\primeideal$.

\item A transfer matrix $\Plant\in\matrices{\fractions{\stable}}$ is \emph{(strictly) causal} if all of its elements are (strictly) causal.

\item A square transfer matrix $\Plant\in\matrices{\fractions{\stable}}$ is \emph{$\primeideal$-nonsingular} if its determinant is in $\stable\setminus\primeideal$.
\end{enumerate}
\end{definition}

It is natural to assume that the plants and controllers discussed are causal. However, the general framework chosen allows a causal plant to have non-causal stabilizing controllers, i.e., the parametrized stabilizing controllers or plants stabilized by a controller in Theorem \ref{thm:Stability} may be non-causal. The approach adopted here is the same as by Mori and Abe\cite{MoriAbe2001}, i.e., non-causal plants and controllers are allowed in the general theoretical framework and causality issues are discussed separately.
To the best of author's knowledge, the most comprehensive results concerning the existence of causal controllers within the general framework are those of Propositions 6.1 and 6.2 in Mori and Abel\cite{MoriAbe2001}. These results are recalled in the following theorem for later use. 

\begin{theorem}\label{thm:ExistenceOfCausalControllers}
\begin{enumerate}
\item If a causal plant is stabilizable, it has a causal stabilizing controller.

\item All stabilizing controllers of a strictly causal plant are causal.
\end{enumerate}
\end{theorem}

\begin{remark}\label{rem:AlternativeCausality}
Results similar to those stated in Theorem \ref{thm:ExistenceOfCausalControllers} are given by Vidyasagar et al.\cite{Vidyasagaretal1982} and Sule\cite{Sule1994}. In particular, Vidyasagar et al. give the second item of the theorem assuming that the plant has both left and right coprime factorizations, but dropping out the requirement that the ideal $\primeideal$ is prime.
\end{remark}

Before proceeding to the problem formulation, a lemma concerning weakly coprime factorizations of causal transfer functions is given. It states that the denominator of any weakly coprime factorization of a causal transfer function is in $\stable\setminus\primeideal$. An arbitrary factorization does not have this property. E.g., given a factorization $\frac{n}{d}$ and $k\in\primeideal$, then $\frac{nk}{dk}$ is a factorization such that $dk\in\primeideal$.

\begin{lemma}\label{lem:WCoprimeFactCausality}
If $0\neq p\in\fractions{\stable}$ is a causal transfer function and $\frac{n}{d}$ its weakly coprime factorization, then $d\in\stable\setminus\primeideal$.
\end{lemma}
\begin{proof}
Since $p$ is causal, it has a factorization $\frac{n_0}{d_0}$ where $n_0\in\stable$ and $d_0\in\stable\setminus\primeideal$. The numerator $n\neq 0$ since $p\neq 0$, so $\frac{n_0}{n}n=n_0\in\stable$ and $\frac{n_0}{n}d=d_0\in\stable$. This implies that $\frac{n_0}{n}\in\stable$ since $\frac{n}{d}$ was assumed to be a weakly coprime factorization. The claim follows since $\frac{n_0}{n}d=d_0\in\stable\setminus\primeideal$ and all factors of an element in $\stable\setminus\primeideal$ belong to that set as well\cite{MoriAbe2001}.

 \end{proof}

\section{Problem formulation}
\label{sec:Problem}

Consider the control configuration of Fig. \ref{fig:Closedloop} where $\Plant\in\fractions{\stable}^{n\times m}$ and $\Cont\in\fractions{\stable}^{m\times n}$. The reference signals are assumed to be generated by a fixed signal generator $\Gen_r\in\fractions{\stable}^{n\times q}$, i.e. they are of the form $\reference=\Gen_r\measurement_0$ where $\measurement_0\in\stable^{q}$. This article is focused on regulation, so it is assumed that the disturbance signals contain only unstable dynamics that are already present in the signal generator. In other words, it is assumed that the disturbance signals are of the form $\disturbance=\Gen_d\disturbance_0$ where the vector $\disturbance_0\in\stable^{q}$ and $\Gen_d=Q\Gen_r\in\fractions{\stable}^{m\times q}$ for some fixed matrix $Q\in\stable^{m\times n}$. It is assumed throughout this paper that the given nominal plant $\Plant$ and the signal generator $\Gen_r$ are causal.

\begin{definition}\label{def:RegDRej}
\begin{enumerate}
\item The controller $\Cont$ 
\emph{regulates} $\Plant$ 
if 
$
\left(I-\Plant\Cont\right)\inv \Gen_r\measurement_0\in \stable^{\outd}
$
for all $\measurement_0\in\stable^{q}$. Furthermore, 
a controller $\Cont$ \emph{robustly regulates} $\Plant$ if 
\begin{enumerate}[i)]
\item it stabilizes $\Plant$, and
\item regulates every plant it stabilizes.
\end{enumerate}

\item The controller $\Cont$ is \emph{causally robustly regulating} for $\Plant$ if
\begin{enumerate}[i)]
\item it stabilizes $\Plant$, and
\item regulates every causal plant it stabilizes.
\end{enumerate}

\item A controller $\Cont$ is \emph{disturbance rejecting} for $\Plant$ if
$
\left(I-\Plant\Cont\right)\inv \Plant \Gen_d\disturbance_0\in \stable^{\outd}
$
for all $\disturbance_0\in\stable^{q}$. Furthermore,
a controller $\Cont$ is \emph{robustly disturbance rejecting} for $\Plant$ if 
\begin{enumerate}[i)]
\item it stabilizes $\Plant$, and
\item is disturbance rejecting for every plant it stabilizes.
\end{enumerate}
\end{enumerate}
\end{definition}

The problem of finding a controller $\Cont$ that (causally) robustly regulates a given nominal plant $\Plant$ is called \emph{the (causal) robust regulation problem.}

\begin{remark}
By Remark \ref{rem:IOStability}, the time domain interpretation of regulation in Definition \ref{def:RegDRej} is that the error between the output and the reference signal converges asymptotically to zero, i.e., $\| y(t)-\reference(t)\|\to 0$ as $t\to\infty$, for all reference signals. 
\end{remark}

\begin{remark}
Note that no topology is needed in the above definitions, and robustness of regulation simply means that stability of the closed loop implies regulation. This definition is justified since stability of the closed loop is a robust property under very general assumptions in the algebraic framework\cite{BallSasane2012,Vidyasagaretal1982}. This means that a controller solving the robust regulation problem regulates all plants sufficiently near the given plant $\Plant$. 
\end{remark}

\begin{remark}\label{rem:ControllerCausality}
	It is not assumed here that the controller $\Cont$ is causal. This is because the parametrized controllers in Theorem \ref{thm:Stability} may be non-causal and some of the existence results are based on the theorem. However, for strictly causal plants such an assumption is unnecessary by Theorem \ref{thm:ExistenceOfCausalControllers}. For causal plants the situation is more complicated and will be discussed later.
\end{remark}

\begin{remark}\label{rem:RRPvsCRRP}
	The plants appearing in applications can be assumed to be causal, so the causal robust regulation problem is the more natural one of the two problems posed here. It is obvious that all robustly regulating controllers are also causally robustly regulating, so the more general theory involving also non-causal plants will produce controllers with the desired properties and is worth studying. On the other hand, a controller may stabilize also non-causal plants, so the robust regulation problem may lead to slightly too strong requirements for the controller. It will be shown in the next section that a strictly causal controller that is causally robustly
regulating is also robustly regulating.
\end{remark}

\section{The internal model principle}\label{sec:IMP}

The aim of this section is to extend the classical frequency domain formulation given by Vidyasagar \cite{Vidyasagar} using fractional representations.
The next theorem is a major step towards that goal. It shows that the unstable dynamics generated by any element of the signal generator $\Gen_r$ must be blocked by every element of the sensitivity and load disturbance sensitivity matrices. Thus, it does not matter if some unstable dynamics appear, e.g., only in the first element of any reference signal. For systems described by rational matrices this corresponds to the fact that the closed loop transfer matrix must vanish completely at each pole of the signal generator located in the closed right half plane $\overline{\C}_+$. \cite{FrancisWonham1975b}

\begin{theorem}\label{thm:GenTFDiv}
A stabilizing controller $\Cont$ is robustly regulating for $\Plant$ with the signal generator $\Gen_r=(\gen_{ij})$ if and only if for all $i\in\List{\outd}$ and $j\in\List{q}$
\begin{eqnarray}\label{eq:GenTFDiv}
\gen_{ij}\begin{bmatrix}
-(I-\Plant\Cont)\inv\Plant & (I-\Plant\Cont)\inv\end{bmatrix}\in\matrices{\stable}.
\end{eqnarray}
\end{theorem}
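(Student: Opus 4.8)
The plan is to reduce everything to the parametrization of plants stabilized by $\Cont$. Since $\Cont$ already stabilizes $\Plant$, it robustly regulates $\Plant$ exactly when it regulates every member of the family $\Plant(X)$ from part~2 of Theorem~\ref{thm:Stability}, i.e.\ when $(I-\Plant(X)\Cont)\inv\Gen_r\in\matrices{\stable}$ for every admissible $X$ (this is the column-wise restatement of regulation). The whole argument then rests on a single identity for the perturbed sensitivity,
\[
(I-\Plant(X)\Cont)\inv = (I-\Plant\Cont)\inv + \widetilde{M} X M\Cont .
\]
This is the crux and also the conceptual heart of the result: perturbing the plant injects the \emph{full} matrix $\widetilde{M}$ — whose two blocks are the load disturbance sensitivity and the sensitivity of the nominal loop — into the sensitivity of the perturbed loop, which is precisely why a hypothesis that a priori only constrains the sensitivity forces a condition on all of $\widetilde{M}$.

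To prove the identity I would use the \emph{left}-factorization form in \eqref{eq:AllStabilizedPlants}, writing $\Plant(X)=\widetilde{D}\inv\widetilde{N}$ with $\widetilde{D}=(I-\Plant\Cont)\inv+\widetilde{M}XM\Cont$ and $\widetilde{N}=(I-\Plant\Cont)\inv\Plant+\widetilde{M}XM$. Then $I-\Plant(X)\Cont=\widetilde{D}\inv(\widetilde{D}-\widetilde{N}\Cont)$, and a direct cancellation gives $\widetilde{D}-\widetilde{N}\Cont=(I-\Plant\Cont)\inv(I-\Plant\Cont)=I$, whence $(I-\Plant(X)\Cont)\inv=\widetilde{D}$. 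I would also record two facts that come for free from stability of the nominal closed loop \eqref{eq:ClosedLoop}: the matrix $\widetilde{M}$ is stable, since its blocks are blocks of $\Closedloop{\Plant,\Cont}$, and $M\Cont\in\matrices{\stable}$, since by the push-through identity its blocks are $\Cont(I-\Plant\Cont)\inv$ and $(I-\Plant\Cont)\inv-I$, both stable.

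For sufficiency, assume $\gen_{ij}\widetilde{M}\in\matrices{\stable}$ for all $i,j$ and expand $(I-\Plant(X)\Cont)\inv\Gen_r=(I-\Plant\Cont)\inv\Gen_r+\widetilde{M}XM\Cont\Gen_r$. The first summand is stable because its $(k,j)$ entry is $\sum_i\gen_{ij}\,[(I-\Plant\Cont)\inv]_{ki}$ and each $\gen_{ij}\,[(I-\Plant\Cont)\inv]_{ki}\in\stable$ by hypothesis. In the second summand I would regroup each scalar term of the $(k,j)$ entry as $(\gen_{cj}\widetilde{M}_{ka})\,X_{ab}\,(M\Cont)_{bc}$: here $\gen_{cj}\widetilde{M}_{ka}\in\stable$ by hypothesis, while $X_{ab},(M\Cont)_{bc}\in\stable$ from $X\in\matrices{\stable}$ and $M\Cont\in\matrices{\stable}$, so the whole sum lies in $\stable$. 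Thus every plant stabilized by $\Cont$ is regulated.

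For necessity, robust regulation gives $(I-\Plant(X)\Cont)\inv\Gen_r\in\matrices{\stable}$ for all admissible $X$; the choice $X=0$ yields $(I-\Plant\Cont)\inv\Gen_r\in\matrices{\stable}$, and subtracting leaves $\widetilde{M}XM\Cont\Gen_r\in\matrices{\stable}$. Evaluating on the elementary matrices $X=e_ae_b^T$ collapses this to $(M\Cont\Gen_r)_{bj}\,\widetilde{M}\in\matrices{\stable}$ for every entry of $M\Cont\Gen_r$. The decisive choice is the lower block of $M\Cont\Gen_r$, equal to $\bigl((I-\Plant\Cont)\inv-I\bigr)\Gen_r$, whose $(i,j)$ entry is $s_{ij}-\gen_{ij}$ with $s_{ij}:=[(I-\Plant\Cont)\inv\Gen_r]_{ij}\in\stable$. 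Hence $(s_{ij}-\gen_{ij})\widetilde{M}\in\matrices{\stable}$, and since $s_{ij}\in\stable$ together with $\widetilde{M}\in\matrices{\stable}$ gives $s_{ij}\widetilde{M}\in\matrices{\stable}$, I conclude $\gen_{ij}\widetilde{M}=s_{ij}\widetilde{M}-(s_{ij}-\gen_{ij})\widetilde{M}\in\matrices{\stable}$. The step I expect to be most delicate is the passage from admissible $X$ (those meeting the determinant conditions of \eqref{eq:AllStabilizedPlants}) to the arbitrary elementary matrices used here; I would handle it by $\stable$-linearity of $X\mapsto\widetilde{M}XM\Cont\Gen_r$, observing that for each elementary $E$ the admissibility determinants are nonzero polynomials in a scaling parameter, so some nonzero constant $t$ (a unit of $\stable$) makes $tE$ admissible and then $\widetilde{M}EM\Cont\Gen_r=t\inv\bigl(\widetilde{M}(tE)M\Cont\Gen_r\bigr)\in\matrices{\stable}$, which is where the standing fact that nonzero constants are units of $\stable$ is used.
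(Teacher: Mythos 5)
Your proposal is correct and takes essentially the same route as the paper: it reduces robust regulation to regulation of the family $\Plant(X)$ from Theorem~\ref{thm:Stability}, establishes the identity $(I-\Plant(X)\Cont)\inv=(I-\Plant\Cont)\inv+\widetilde{M}XM\Cont$, proves sufficiency by entrywise stability, and proves necessity by setting $X=0$ and then testing rank-one choices of $X$ while perturbing them to meet the determinant conditions. The only differences are cosmetic: the paper extracts the two blocks $\gen_{ij}(I-\Plant\Cont)\inv$ and $\gen_{ij}(I-\Plant\Cont)\inv\Plant$ separately via two block patterns of $X$ and handles admissibility with the matrix determinant lemma and the fixed scaling $2$, whereas you obtain all of $\gen_{ij}\widetilde{M}$ at once from the lower block of $M\Cont\Gen_r$ and use a generic scaling argument (both arguments rely equally on small nonzero constants being units of $\stable$, which holds for every ring considered in the paper).
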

\begin{proof}
Denote $M=\begin{bmatrix}
((I-\Cont\Plant)\inv)^T & (\Plant(I-\Cont\Plant)\inv)^T
\end{bmatrix}^T$, $U=(I-\Plant\Cont)\inv$, and $\widetilde{M}=\begin{bmatrix}
-U\Plant & U\end{bmatrix}$. These matrices are stable since $\Cont$ stabilizes $\Plant$. Since $\Cont$ stabilizes $\Plant$, the second part of Theorem \ref{thm:Stability} gives the parametrization of plants $\Plant(X)$ the controller $\Cont$ stabilizes and
\begin{subequations}
\label{eq:RegStabPert}
\begin{eqnarray}
(I-\Plant(X)\Cont)\inv \Gen_r&=&\left(U +\widetilde{M}X M\Cont -\left(U \Plant+\widetilde{M}X M\right)\Cont\right)\inv\left(U +\widetilde{M}X M\Cont\right)\Gen_r\\
&=& \left(U -U \Plant\Cont\right)\inv \left(U +\widetilde{M}X M\Cont\right)\Gen_r\\
& = &U\Gen_r +\widetilde{M}X M\Cont\Gen_r
\end{eqnarray}
\end{subequations}
where $\det\left(\left(I-\Plant\Cont\right)\inv +\widetilde{M}XM\Cont\right)\neq 0$.

The sufficiency is shown first. Assume that \eqref{eq:GenTFDiv} holds.
Observe that the reference signal $\reference=\Gen_r \measurement_0$ with an arbitrary stable vector $\measurement_0$ of suitable size
can be written in the form
$\reference=\sum_{i,j}\gen_{ij}\measurement_{ij} $ 
where $\measurement_{ij}$ 
are stable vectors. Substituting this into \eqref{eq:RegStabPert} yields \begin{eqnarray*}
(I-\Plant(X)\Cont)\inv \Gen_r y_0=\sum_{i,j}\gen_{ij}U y_{ij}+\sum_{i,j}\gen_{ij}\widetilde{M}XMCy_{ij}.
\end{eqnarray*}
This vector is stable since $\gen_{ij}\widetilde{M},\gen_{ij}U\in\matrices{\stable}$ by \eqref{eq:GenTFDiv} and $M\Cont\in\matrices{\stable}$ since $\Cont$ stabilizes $\Plant$.
Thus, $\Cont$ is robustly regulating.

The necessity is shown next. Assume that $\Cont$ robustly regulates $\Plant$. Since $\Cont$ regulates all the plants it stabilizes, the matrix in \eqref{eq:RegStabPert} is stable for all matrices $X$. Choosing $X=0$ yields $U\Gen_r\in\matrices{\stable}$. This and \eqref{eq:RegStabPert} imply that $\widetilde{M}X M\Cont\Gen_r\in\matrices{\stable}$. In particular,
\begin{eqnarray*}
 \widetilde{M}\begin{bmatrix}
0 & 0\\
0 & X_0
\end{bmatrix} M\Cont\Gen_r &  = &
\begin{bmatrix}
-U\Plant & U
\end{bmatrix}
\begin{bmatrix}
0 & 0\\
0 & X_0
\end{bmatrix}
\begin{bmatrix}
(I-\Cont\Plant)\inv \\ \Plant(I-\Cont\Plant)\inv
\end{bmatrix}
\Cont\Gen_r\\
& = &U X_0 \Plant(I-\Cont\Plant)\inv\Cont\Gen_r\\
& = &U X_0 (I-\Plant\Cont)\inv\Plant\Cont\Gen_r\\
& = &U X_0 (U-I)\Gen_r\in\matrices{\stable}
\end{eqnarray*}
if $\det(U+U X_0 (U-I))\neq 0$. Since $U\Gen_r\in\matrices{\stable}$, it follows that
$UX_0\Gen_r\in\matrices{\stable}$.

Choose $X_0=\mathbf{e}_k\mathbf{e}_i^T$ where $\mathbf{e}_h$ is the $h$th narural basis vector of $\fractions{\stable}^{n}$. If $\det(U+U X_0 (U-I))\neq 0$ for some $i,k\in\{1,\ldots,n\}$, then $U\mathbf{e}_k \mathbf{e}_i \Gen_r\in\matrices{\stable}$. If for some some $i,k\in\{1,\ldots,n\}$
$$
\det(U+U \mathbf{e}_k\mathbf{e}_i^T (U-I))=\det(U)\det(I+\mathbf{e}_k\mathbf{e}_i^T (U-I))= 0,$$
then $\det(I+\mathbf{e}_k\mathbf{e}_i^T (U-I))= 0$ since $\det(U)\neq 0$ by the stability of the closed loop system. Write $K:=\mathbf{e}_k\mathbf{e}_i^T (U-I)=\mathbf{g}\mathbf{h}^T$  where $\mathbf{g},\mathbf{h}\in \fractions{\stable}^{n}$. This is possible since $K$ is a rank one matrix. By the matrix determinant lemma, $\det(I+K)=1+\mathbf{h}^T\mathbf{g}=0$. This can happen only if $\mathbf{h}^T\mathbf{g}=-1$, so choosing $X_0=2\mathbf{e}_k\mathbf{e}_i^T$ one has $\det(U+U X_0 (U-I))\neq 0$ and $U\mathbf{e}_k \mathbf{e}_i \Gen_r\in\matrices{\stable}$.

It has been shown that $U\mathbf{e}_k \mathbf{e}_i \Gen_r\in\matrices{\stable}$ for all $k,i\in\List{\outd}$. This means that the $k$th column of $U$ multiplied by any element in the $i$th row of $\Gen_r$ must be stable. Thus,
$
\gen_{ij}(I-\Plant\Cont)\inv\in\matrices\stable
$ 
for all possible $i\in\{1,\ldots,n\}$ and $j\in\{1,\ldots,q\}$.

In order to complete the proof, choose $X=\begin{bmatrix}
0 & X_0\\0 & 0
\end{bmatrix}$ in \eqref{eq:RegStabPert}. Similar arguments as above show that 
$
\gen_{ij}(I-\Plant\Cont)\inv\Plant \in\matrices{\stable},
$ 
for all $i\in\List{\outd}$ and $j\in\List{q}$ so \eqref{eq:GenTFDiv} follows. 
\end{proof}

\begin{remark}\label{rem:RORPandRRC}
Similar aguments as in the sufficiency part of the above proof show that \eqref{eq:GenTFDiv} implies that 
$
\left(I-\Plant(X)\Cont\right)\inv \Plant(X) \Gen_d\disturbance_0\in \matrices{\stable}
$ 
for all suitable $X$. This means that \eqref{eq:GenTFDiv} implies robust disturbance rejection as well. Thus, all robustly regulating controllers are also robustly disturbance rejecting. On the other hand, it was observed in Example 5.4 of Laakkonen and Pohjolainen\cite{LaakkonenPohjolainen2015} that there exist robustly disturbance rejecting controllers that are not robustly regulating.
\end{remark}

\begin{example}
The condition \eqref{eq:GenTFDiv} of Theorem \ref{thm:GenTFDiv} is equivalent of $\Cont$ being regulating and disturbance rejecting for $\Plant$ with all reference and disturbance signals of the form $\reference=\gen_{ij}\mathbf{e}_k$ and $\disturbance=\gen_{ij} \mathbf{e}'_h$ where $\mathbf{e}_k$ and $\mathbf{e}'_h$ are arbitrary natural basis vector of $\fractions{\stable}^\outd$ and $\fractions{\stable}^\ind$, respectively. This can be used to test if a controller is robustly regulating. For example, if one wants to find out if a controller $\Cont$ achieves robust regulation for the single reference signal $\reference=(0,\gen,0,0,\ldots,0)\in \fractions{\stable}^\outd$, then one needs to test if $\Cont$ is regulating for all reference signals of the form $\gen\mathbf{e}_k$ with $k\in\List{\outd}$ and disturbance rejecting for all disturbance signals $\gen_{ij}\mathbf{e}'_h$ with $h\in\List{\ind}$.
\end{example}

The next theorem is the first main result of this paper. It is a reformulation of the famous internal model principle using no coprime factorizations. 
It states that all the unstable dynamics produced by the signal generator must be built into the controller as an internal model in order to make it robustly regulating. It generalizes Theorem 3.2 of Laakkonen and Quadrat \cite{LaakkonenQuadrat2017} from SISO 
systems to MIMO 
systems.
In Section \ref{sec:Simplification}, the new formulation is confirmed to be equivalent to the classical one of Vidyasagar \cite{Vidyasagar}  
when coprime factorizations exist.

\begin{theorem}\label{thm:IMP}
Denote $\Gen_r=(\gen_{ij})$. Controller $\Cont$ solves the robust regulation problem for $\Plant$ if and only if it stabilizes $\Plant$ and for all $i\in\List{\outd}$ and $j\in\List{q}$ there exist $A_{ij},B_{ij}\in\matrices{\stable}$ such that 
\begin{eqnarray}\label{eq:IMP}
\gen_{ij} I=A_{ij}+B_{ij}\Cont.
\end{eqnarray}
\end{theorem}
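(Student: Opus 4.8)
The plan is to deduce the theorem from Theorem \ref{thm:GenTFDiv} rather than repeating the perturbation argument behind it. Both solving the robust regulation problem and condition \eqref{eq:IMP} presuppose that $\Cont$ stabilizes $\Plant$, so it suffices to prove that, for a controller $\Cont$ stabilizing $\Plant$, the divisibility condition \eqref{eq:GenTFDiv} is equivalent to the internal model condition \eqref{eq:IMP}. Throughout I would write $U:=(I-\Plant\Cont)\inv$, so that \eqref{eq:GenTFDiv} reads exactly $\gen_{ij}U\in\matrices{\stable}$ and $\gen_{ij}U\Plant\in\matrices{\stable}$ for all $i\in\List{\outd}$, $j\in\List{\refd}$. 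Since $\Cont$ stabilizes $\Plant$, the blocks $U$, $U\Plant$, $\Cont U=\Cont(I-\Plant\Cont)\inv$ and $(I-\Cont\Plant)\inv$ of the closed loop \eqref{eq:ClosedLoop} are all stable.

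For the implication \eqref{eq:GenTFDiv} $\Rightarrow$ \eqref{eq:IMP}, I would start from the defining identity $U\inv=I-\Plant\Cont$, which rearranges to $I=U-U\Plant\Cont$. Multiplying through by the scalar $\gen_{ij}$ gives $\gen_{ij}I=\gen_{ij}U+(-\gen_{ij}U\Plant)\Cont$, so setting $A_{ij}:=\gen_{ij}U$ and $B_{ij}:=-\gen_{ij}U\Plant$ yields \eqref{eq:IMP} at once; these are stable precisely because \eqref{eq:GenTFDiv} asserts that $\gen_{ij}U$ and $\gen_{ij}U\Plant$ are stable. This half is essentially a one-line rewriting of the definition of $U$.

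For the converse \eqref{eq:IMP} $\Rightarrow$ \eqref{eq:GenTFDiv}, I would right-multiply the relation $\gen_{ij}I=A_{ij}+B_{ij}\Cont$ successively by the stable matrices $U$ and $U\Plant$. The first gives $\gen_{ij}U=A_{ij}U+B_{ij}(\Cont U)$, which is stable since $A_{ij},B_{ij},U$ are stable and $\Cont U$ is a stable entry of \eqref{eq:ClosedLoop}. The second gives $\gen_{ij}U\Plant=A_{ij}U\Plant+B_{ij}(\Cont U\Plant)$; here the only point needing care is stability of $\Cont U\Plant=\Cont(I-\Plant\Cont)\inv\Plant$, which I would certify from the identity $(I-\Cont\Plant)\inv=I+\Cont(I-\Plant\Cont)\inv\Plant$, whence $\Cont U\Plant=(I-\Cont\Plant)\inv-I\in\matrices{\stable}$. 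Together these establish \eqref{eq:GenTFDiv}, closing the equivalence.

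I do not expect a serious obstacle once Theorem \ref{thm:GenTFDiv} is available; the whole argument is the single rearrangement $I=U-U\Plant\Cont$ read in two directions. The only thing to get right is the bookkeeping of which closed-loop products are stable, in particular verifying $(I-\Cont\Plant)\inv=I+\Cont(I-\Plant\Cont)\inv\Plant$ so as to certify stability of $\Cont(I-\Plant\Cont)\inv\Plant$; everything else reduces to combining stable factors.
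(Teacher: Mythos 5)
Your proposal is correct and follows essentially the same route as the paper: both directions reduce to Theorem \ref{thm:GenTFDiv}, with necessity given by multiplying $I=U-U\Plant\Cont$ by $\gen_{ij}$ and sufficiency by right-multiplying \eqref{eq:IMP} by the stable closed-loop blocks. The only cosmetic difference is that the paper packages $\begin{bmatrix}-U\Plant & U\end{bmatrix}$ as a single matrix $\widetilde{M}$ and multiplies once, whereas you treat the two blocks separately and spell out the identity $(I-\Cont\Plant)\inv=I+\Cont(I-\Plant\Cont)\inv\Plant$ that the paper leaves implicit.
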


\begin{proof}
Denote $\widetilde{M}=\begin{bmatrix}
-\left(I-\Plant\Cont\right)\inv \Plant\;\; & \left(I-\Plant\Cont\right)\inv
\end{bmatrix}$. If $\gen_{ij}I=A_{ij}+B_{ij}\Cont$, it follows that 
$
\gen_{ij}\widetilde{M}
 =A_{ij}\widetilde{M}+B_{ij}\Cont\widetilde{M}\in\matrices{\stable}
$. 
The matrices $\widetilde{M}$ and $\Cont\widetilde{M}$ on the right hand side of the equation are stable since $\Cont$ stabilizes $\Plant$. The controller $C$ is robustly regulating by Theorem \ref{thm:GenTFDiv}. This shows sufficiency. The necessity follows since
$$
\gen_{ij}I=\gen_{ij}(I-\Plant\Cont)\inv (I-\Plant\Cont)=\gen_{ij}(I-\Plant\Cont)\inv-\gen_{ij}(I-\Plant\Cont)\inv \Plant\Cont
$$
where $A_{ij}=\gen_{ij}(I-\Plant\Cont)\inv$ and $B_{ij}=-\gen_{ij}(I-\Plant\Cont)\inv\Plant$ are stable matrices by Theorem \ref{thm:GenTFDiv} when $\Cont$ is robustly regulating.
\end{proof}

As mentioned in Remark \ref{rem:RRPvsCRRP}, the definition requires a robustly regulating controller to regulate even the non-causal plants it stabilizes. By changing the roles of the plant and controller in Theorem \ref{thm:ExistenceOfCausalControllers}, one has that a strictly causal controller stabilizes only causal plants. Combining it with the above theorem, the following corollary is obtained.

\begin{corollary}\label{cor:IMPStrictlyCausal}
A strictly causal stabilizing controller $\Cont$ is causally robustly regulating for $\Plant$ with the signal generator $\Gen_r=(\gen_{ij})$ if and only if it satisfies the two equivalent conditions \eqref{eq:GenTFDiv} and \eqref{eq:IMP} for all elements $\gen_{ij}$.
\end{corollary}

\begin{remark}
Theorems \ref{thm:GenTFDiv} and \ref{thm:IMP} are necessary and sufficient conditions for a stabilizing controller to be robustly regulating. Thus, they can be thought as alternative characterizations of the internal model principle. However, only the latter one directly deals with the structure of the controller whereas the first one states a property of the closed loop system. Therefore the latter one is considered as the internal model principle in this article.
\end{remark}



\begin{example}\label{exa:IMPSISO}
For SISO plants, Theorem \ref{thm:IMP} takes the form $\langle \Gen_r\rangle\subseteq \langle 1,\Cont\rangle$.\cite{LaakkonenQuadrat2015} The inclusion indicates that the signals generated by the generator can be divided into a stable part and an unstable part generated by the controller. This makes sense, since only the unstable dynamics need to be regulated by $\Cont$.
\end{example}

\begin{example}
Consider linear systems described by ordinary differential equations with $n$ inputs and outputs. The natural choice for the ring of stable transfer functions is $\stable=\RHinf$ and then $\fractions{\RHinf}=\R(s)$. The causal plants in $\R(s)$ are the proper rational matrices. Such a matrix is stable if and only if it has no poles in the closed right half-plane $\overline{\C}_+$, so the unstable dynamics of the reference signals are characterized by the poles in $\overline{\C}_+$.

Consider a signal generator $\Gen_r=(\gen_{ij})\in\fractions{\RHinf}^{n\times n}$. The condition \eqref{eq:IMP} means that if $\gen_{ij}$ has a pole of order $k$ at $z\in\overline{\C}_+$, then any robustly regulating controller $\Cont$ must have a pole of the same or higher order at $z$. Furthermore, one can write $\Cont=U\Lambda V$ where $\Lambda=\diag{h_1,\ldots,h_n}\in \fractions{\RHinf}^{n\times n}$ is the Smith-McMillan form of $\Cont$ and $U$ and $V$ are invertible over $\RHinf$.\cite{Vidyasagar} Then \eqref{eq:IMP} holds if and only if each of the $n$ diagonal elements $h_j$ in the Smith-McMillan form have a pole of order $k$ or higher at $z$. 
This corresponds to the well-known fact that a robustly regulating controller must contain a $n$-folded copy of any unstable dynamics of reference signals. \cite{PaunonenPohjolainen2010,FrancisWonham1975a}
\end{example}

\section{Simplification of the internal model}
\label{sec:Simplification}

The results of the previous section revealed that the unstable dynamics of each element in the signal generator must be included into a robustly regulating controller. The next theorem shows that the internal model 
is characterized by the fractional ideal generated by the 
elements of the signal generator. The theorem provides a way to characterize the internal model in a compact form as illustrated by Example \ref{exa:Simplification}.


\begin{theorem}\label{thm:Simplification}
Let $\Cont$ stabilize $\Plant$. Consider $\Gen_r=(\gen_{ij})$ and the fractional ideal $J=\langle \gen_{ij} | 1\leq i\leq n, 1\leq j\leq q\rangle$.
\begin{enumerate}
\item If $J\subseteq \langle f_1,\ldots,f_k\rangle$ and there exist $A_l$ and $B_l$ such that $f_l I=A_l+B_l \Cont$ for all $l=1,\ldots, k$, then $\Cont$ is robustly regulating. 
\item If $\langle f_1,\ldots,f_k\rangle\subseteq J$ and $\Cont$ is robustly regulating, then there exist $A_l$ and $B_l$ such that $f_l I=A_l+B_l \Cont$ for all $l=1,\ldots, k$.
\end{enumerate}
\end{theorem}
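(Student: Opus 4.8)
The plan is to isolate the set of scalars that satisfy the internal model condition of Theorem~\ref{thm:IMP} and to recognise it as an $\stable$-module, after which both parts become pure inclusion-chasing. Concretely, I would introduce
\[
\mathcal{I}_\Cont := \Bdefinedset{f\in\fractions{\stable}}{\exists\, A\in\stable^{\outd\times\outd},\, B\in\stable^{\odxid}\text{ with } fI = A + B\Cont}.
\]
First I would check that $\mathcal{I}_\Cont$ is an $\stable$-submodule of $\fractions{\stable}$: closure under addition is immediate since $(f_1+f_2)I=(A_1+A_2)+(B_1+B_2)\Cont$, and closure under multiplication by $r\in\stable$ follows from $(rf)I=(rA)+(rB)\Cont$ with $rA,rB$ still stable. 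I would also record that $\mathcal{I}_\Cont$ is in fact a fractional ideal, which keeps the argument inside the language of this section: choosing $0\neq d\in\stable$ with $d\Cont\in\matrices{\stable}$ (such $d$ exists as a common denominator of the entries of $\Cont$), any $f\in\mathcal{I}_\Cont$ satisfies $dfI = dA + B(d\Cont)\in\matrices{\stable}$, whence $df\in\stable$ and $d\,\mathcal{I}_\Cont\subseteq\stable$.

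The second step is to restate Theorem~\ref{thm:IMP} as a single ideal containment. Since $\Cont$ is assumed to stabilize $\Plant$, that theorem says $\Cont$ is robustly regulating precisely when $\gen_{ij}\in\mathcal{I}_\Cont$ for every $i\in\List{\outd}$ and $j\in\List{q}$. Because $\mathcal{I}_\Cont$ is a module and $J=\langle\gen_{ij}\mid 1\le i\le\outd,\,1\le j\le q\rangle$ is the module these elements generate, the condition ``$\gen_{ij}\in\mathcal{I}_\Cont$ for all $i,j$'' is equivalent to the inclusion $J\subseteq\mathcal{I}_\Cont$. Thus the content of Theorem~\ref{thm:IMP} becomes the compact statement: a stabilizing $\Cont$ robustly regulates $\Plant$ if and only if $J\subseteq\mathcal{I}_\Cont$.

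With this reformulation both claims reduce to chasing inclusions of modules. For item~(1) the hypotheses give $f_l\in\mathcal{I}_\Cont$ for each $l$, hence $\langle f_1,\ldots,f_k\rangle\subseteq\mathcal{I}_\Cont$ by the module property; combined with $J\subseteq\langle f_1,\ldots,f_k\rangle$ this yields $J\subseteq\mathcal{I}_\Cont$, so $\Cont$ is robustly regulating. For item~(2) robust regulation gives $J\subseteq\mathcal{I}_\Cont$, and together with $\langle f_1,\ldots,f_k\rangle\subseteq J$ we obtain $f_l\in\mathcal{I}_\Cont$ for every $l$, which is exactly the existence of the required $A_l,B_l$ with $f_lI=A_l+B_l\Cont$. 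I do not expect a serious obstacle: once $\mathcal{I}_\Cont$ is in place the argument is essentially formal. The one point I would write out most carefully — the only place where anything can go wrong — is the clean identification of the internal-model condition with membership in $\mathcal{I}_\Cont$, matching the matrix sizes in $fI=A+B\Cont$ with the generators of $J$ so that Theorem~\ref{thm:IMP} applies verbatim.
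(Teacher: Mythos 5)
Your proof is correct and is essentially the paper's own argument: the paper proves part (1) by writing $\gen_{ij}=a_1f_1+\cdots+a_kf_k$ with $a_l\in\stable$ and forming the corresponding $\stable$-linear combination of the identities $f_lI=A_l+B_l\Cont$, which is exactly the computation underlying your claim that $\mathcal{I}_\Cont$ is an $\stable$-module, and then invokes Theorem \ref{thm:IMP} just as you do. Your packaging has the minor advantage of making part (2), which the paper dismisses as provable ``using similar arguments'', equally explicit.
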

\begin{proof}
Only the first part is proved. The second part can be proved using similar arguments. It is assumed that $J\subseteq \langle f_1,\ldots,f_k\rangle$ and that there exist $A_l$ and $B_l$ such that $f_l I=A_l+B_l \Cont$ for all $l=1,\ldots, k$. Now $\gen_{ij}\in \langle f_1,\ldots,f_k\rangle$ or equivalently 
$
\gen_{ij}=a_1 f_1+\cdots +a_k f_k
$ 
for some $a_1,\ldots, a_k\in \stable$. Consequently,
\begin{eqnarray*}
\gen_{ij} I & =\sum_{l=1}^k a_l f_l I = \sum_{l=1}^k a_l(A_l+B_l \Cont) =\left(\sum_{l=1}^k a_l A_l\right)+\left(\sum_{l=1}^{k}a_l B_l\right)\Cont
\end{eqnarray*}
where $\sum_{l=1}^k a_l A_l$ and $\sum_{l=1}^{k}a_l B_l$ are stable matrices. Since $\gen_{ij}$ is an arbitrary element of $\Gen_r$, the result follows by Theorem \ref{thm:IMP}. 
\end{proof}

The above theorem shows that the instability generated by $\Gen_r=(\gen_{ij})$ is captured by the fractional ideal $J$ generated by the elements $\gen_{ij}$. In particular, if $J$ is principal and 
$J=\langle\gen\rangle$ where $\gen\in\fractions{\stable}$, then a stabilizing controller is robustly regulating if and only if there exist stable $A$ and $B$ such that 
$
\gen I=A+B\Cont
$. 
This in particular means that if $\stable$ is a Bezout domains, i.e., a domain where all finitely generated ideals are principal, the required internal model is always characterized by a single element of $\fractions{\stable}$.

\begin{example}\label{exa:example1}
The integral domain $\RHinf$ is a principal ideal domain.\cite{Vidyasagar} Consequently, it is a Bezout domain and the internal model is always captured by a single rational function in the field $\fractions{\RHinf}=\R(s)$ if $\stable=\RHinf$.
Other common rings in systems theory, e.g., the Hardy space $\Hinfty$  or the convolution algebra $\mathcal{A}(\beta)$\cite{CallierDesoer1978}, are not typically Bezout. Consequently, there are signal generators generating instability that cannot be captured by any single fraction over the ring.
\end{example}

\begin{example}\label{exa:Simplification}
Choose $\stable=\Hinfty$ and consider the signal generator 
\begin{eqnarray*}
\Gen_r(s)=\begin{bmatrix}
\frac{1}{\e^{-s}-1} & 0\\
0 & \frac{1}{s}+\frac{1}{s^2+\pi^2}
\end{bmatrix}.
\end{eqnarray*}
Then the reference signals are of the form 
\begin{eqnarray*}
\reference(s)=\begin{bmatrix}
\frac{1}{\e^{-s}-1} & 0\\
0 & \frac{1}{s}+\frac{1}{s^2+\pi^2}
\end{bmatrix}\begin{bmatrix}
a(s)\\
b(s)
\end{bmatrix}=\begin{bmatrix}
\frac{1}{\e^{-s}-1}a(s) \\ \left(\frac{1}{s}+\frac{1}{s^2+\pi^2}\right)b(s) 
\end{bmatrix}
\end{eqnarray*}
where $a(s),b(s)\in\Hinfty$.
The inclusion $\langle \frac{(s+1)^2}{(\e^{-s}-1)(s^2+\pi^2)}\rangle\subseteq \langle \frac{1}{\e^{-s}-1}, \frac{1}{s}+\frac{1}{s^2+\pi^2}\rangle$ holds since
\begin{eqnarray*}
\frac{(s+1)^2}{(\e^{-s}-1)(s^2+\pi^2)} & =\frac{1}{\e^{-s}-1}\alpha(s)+\left(\frac{1}{s}+\frac{1}{s^2+\pi^2}\right)\beta(s)
\end{eqnarray*}
where
\begin{eqnarray*}
\beta(s)  &=& \frac{3\pi^2-1+s(\pi^2-3)}{2(s+1)}\in\Hinfty,\\
\alpha(s)  & = & \frac{(s+1)^2}{s^2+\pi^2}-\beta(s)\left(\frac{\e^{-s}-1}{s}+\frac{\e^{-s}-1}{s^2+\pi^2}\right)\\& =&\frac{2(s+1)^3+(1-3\pi^2+s(3-\pi^2))(\e^{-s}-1)}{2(s+1)(s^2+\pi^2)}-\beta(s)\frac{\e^{-s}-1}{s}\in\Hinfty. 
\end{eqnarray*}
Note that $\alpha(s)$ is stable due to the pole-zero cancellations at $\pm\pi\i$ and $0$. Similarly, $$\langle \frac{1}{\e^{-s}-1}, \frac{1}{s}+\frac{1}{s^2+\pi^2}\rangle\subseteq\langle \frac{(s+1)^2}{(\e^{-s}-1)(s^2+\pi^2)}\rangle$$ since
\begin{eqnarray*}
&\frac{1}{\e^{-s}-1}& = \frac{(s+1)^2}{(\e^{-s}-1)(s^2+\pi^2)}\frac{s^2+\pi^2}{(s+1)^2},\\
&\frac{1}{s}+\frac{1}{s^2+\pi^2}&=\frac{(s+1)^2}{(\e^{-s}-1)(s^2+\pi^2)}\frac{(\e^{-s}-1)(s^2+\pi^2+s)}{s(s+1)^2}.
\end{eqnarray*}
Thus,
\begin{eqnarray}\label{eq:Simplification}
\left\langle \frac{(s+1)^2}{(\e^{-s}-1)(s^2+\pi^2)}\right\rangle= \left\langle \frac{1}{\e^{-s}-1}, \frac{1}{s}+\frac{1}{s^2+\pi^2}\right\rangle.
\end{eqnarray}
This means that the overall instability generated by the signal generator is captured by the single element $\gen(s)=\frac{(s+1)^2}{(\e^{-s}-1)(s^2+\pi^2)}$. This can be verified by observing that the pole locations and orders of $\gen(s)$ are exactly the same that appear in the non-zero elements of the signal generator. Note that both diagonal elements of $\Gen_r(s)$ have a first order pole at $s=0$, but this does not raise the order of the corresponding pole in $\gen(s)$. In fact, if multiple elements of the signal generator have a pole at the same location, only the maximal order over them matters.

According to Theorems \ref{thm:IMP} and \ref{thm:Simplification}, a stabilizing controller is robustly regulating if and only if there exist stable matrices $A,B\in\matrices{\Hinfty}$ such that $\gen I=A+B\Cont$ where $\gen=\frac{(s+1)^2}{(\e^{-s}-1)(s^2+\pi^2)}$. Stable matrices $A$ and $B$ cannot have poles in $\overline{\C}_+$, so the unstable poles of the reference signals must be found in 
$\Cont$. 

\end{example}

\begin{theorem}\label{thm:IMPClassic}
Consider $\Gen_r=(\gen_{ij})$, let $\Cont$ stabilize $\Plant$ and assume that the fractional ideal $J=\langle \gen_{ij} | 1\leq i\leq n, 1\leq j\leq q\rangle$ is principal. Let $\gen\in\fractions{\stable}$ be such that $J=\langle\gen\rangle$. If $\gen=\frac{n}{d}$ is a weakly coprime factorization, then $\Cont$ is robustly regulating if and only if
there exist $A_0,B_0\in\matrices{\stable}$ such that
\begin{eqnarray}\label{eq:IMPClassic1}
d\inv I=A_0+B_0\Cont.
\end{eqnarray}
If in addition to the above assumptions $\Cont$ has a right coprime factorization $\Cont=N D^{-1}$,
then $\Cont$ is robustly regulating if and only if $D=d D_0$ for some $D_0\in\matrices{\stable}$.
\end{theorem}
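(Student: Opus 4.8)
The plan is to reduce everything to the internal-model condition of Theorem~\ref{thm:IMP}. Since $J=\langle\gen\rangle$ is principal, Theorems~\ref{thm:IMP} and~\ref{thm:Simplification} show that a stabilizing $\Cont$ is robustly regulating if and only if there exist $A,B\in\matrices{\stable}$ with $\gen I=A+B\Cont$. The first assertion will follow once I show that this last condition is equivalent to \eqref{eq:IMPClassic1}, and the second will be obtained from \eqref{eq:IMPClassic1} by exploiting the coprime factorization $\Cont=ND\inv$.

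For the equivalence between $\gen I=A+B\Cont$ and \eqref{eq:IMPClassic1}, one implication is immediate: multiplying $d\inv I=A_0+B_0\Cont$ by $n$ and writing $\gen=n/d$ gives $\gen I=(nA_0)+(nB_0)\Cont$ with stable coefficients, and the principal-ideal form of Theorem~\ref{thm:Simplification} then yields robust regulation without using weak coprimeness. For the converse I would \emph{not} manipulate $\gen I=A+B\Cont$ directly (that identity lives in $\fractions{\stable}$ and the term $B\Cont$ is unstable, so $n$ cannot be cancelled there); instead I would work with the closed loop. Writing $U=(I-\Plant\Cont)\inv$, stabilization gives $U,U\Plant\in\matrices{\stable}$, while Theorem~\ref{thm:GenTFDiv} together with $\gen\in J=\langle\gen_{ij}\rangle$ gives $\gen U,\gen U\Plant\in\matrices{\stable}$. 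Setting $W=d\inv U$ one then has $dW=U$ and $nW=\gen U$ both stable, so weak coprimeness of $n/d$ forces $W=d\inv U\in\matrices{\stable}$ entrywise, and likewise $d\inv U\Plant\in\matrices{\stable}$. Then $d\inv I=d\inv U(I-\Plant\Cont)=d\inv U-(d\inv U\Plant)\Cont$ has the form \eqref{eq:IMPClassic1}.

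For the second assertion I would use the first one to replace robust regulation by \eqref{eq:IMPClassic1} and substitute $\Cont=ND\inv$. Right-multiplying $d\inv I=A_0+B_0\Cont$ by $D$ gives $d\inv D=A_0D+B_0N\in\matrices{\stable}$, so $D_0:=d\inv D$ is stable and $D=dD_0$; this direction needs no coprimeness. Conversely, assuming $D=dD_0$, I would left-multiply the coprimeness identity $XN+YD=I$ by $D_0=d\inv D$ to obtain $(D_0X)N+(D_0Y)D=d\inv D$, and right-multiplying by $D\inv$ gives $d\inv I=(D_0Y)+(D_0X)\Cont$, which is \eqref{eq:IMPClassic1} with stable $A_0=D_0Y$ and $B_0=D_0X$; the first assertion then returns robust regulation.

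The main obstacle is the necessity half of the first equivalence, i.e.\ cancelling the numerator $n$. The natural attempt through $\gen I=A+B\Cont$ fails, and the point is to apply weak coprimeness to the genuinely stable matrices $d\inv U$ and $\gen U=n\,d\inv U$ rather than to the abstract internal-model identity; recognizing that stabilization already supplies $dW=U\in\matrices{\stable}$ is what makes the cancellation legitimate. Everything else is bookkeeping with the coprime identity, where the only trick is to multiply it by $D_0=d\inv D$ on the correct side so that $\Cont=ND\inv$ stays intact.
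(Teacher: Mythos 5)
Your proposal is correct and follows essentially the same route as the paper: sufficiency by multiplying \eqref{eq:IMPClassic1} by $n$ and invoking Theorem~\ref{thm:Simplification}, necessity by applying weak coprimeness to $A_0=d\inv(I-\Plant\Cont)\inv$ and $B_0=-d\inv(I-\Plant\Cont)\inv\Plant$ (your $W=d\inv U$ and $d\inv U\Plant$), and the second assertion by multiplying \eqref{eq:IMPClassic1} by $D$ and the Bezout identity $XN+YD=I$ by $D_0$. Your explicit remark that the cancellation of $n$ must be performed on the stable closed-loop matrices rather than on the identity $\gen I=A+B\Cont$ is exactly the point the paper's proof exploits.
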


\begin{proof}
First, the controller $\Cont$ is shown to be robustly regulating if \eqref{eq:IMPClassic1} holds. 
Multiplying both sides of \eqref{eq:IMPClassic1} by $n$ implies 
$
\gen I=A+B\Cont
$. This shows that $\Cont$ is robustly regulating by Theorem \ref{thm:Simplification}.

Next it is shown that robust regulation implies \eqref{eq:IMPClassic1}. To this end, set $A_0=d\inv(I-\Plant\Cont)\inv$ and $B_0=-d\inv(I-\Plant\Cont)\inv\Plant$. The matrices $nA_0$ and $dA_0$ are stable since $C$ is robustly regulating and stabilizing.  Since $\gen=\frac{n}{d}$ is a weakly coprime factorization this implies that $A_0$ is stable. Similar arguments show that $B_0$ is stable. The claim follows since 
$
d\inv I=d\inv(I-\Plant\Cont)\inv(I-\Plant\Cont)=A_0+B_0\Cont.
$

The second statement of the theorem is shown by proving that \eqref{eq:IMPClassic1} is equivalent to that $D=dD_0$ for some $D_0\in\matrices{\stable}$. It is assumed that $\Cont=N D^{-1}$ is a right coprime factorization, so there exist $X,Y\in\matrices{\stable}$ such that $XD-YN=I$. If \eqref{eq:IMPClassic1} holds, then 
$
d\inv D=(A_0+B_0\Cont)D=A_0 D+B_0 N:=D_0\in\matrices{\stable}
$ 
or equivalently $D=dD_0$. The conclusion follows since, if $D=d D_0$, then
\begin{eqnarray*}
d\inv I=D_0 D\inv=D_0(XD-YN)D\inv=D_0 X -D_0 Y \Cont.
\end{eqnarray*}
\end{proof}

Any weakly right coprime factorization of a stabilizing controller is also a right coprime factorization.\cite{Quadrat2006b}
Thus, it is sufficient to check that a given factorization of the controller is weakly coprime when applying the result of Theorem \ref{thm:IMPClassic}. Note that the assumption that $\gen$ of the above theorem has a coprime factorization does not follow by the existence of a weak coprime factorization \cite{LaakkonenQuadrat2017}
and cannot be done without restricting generality.

\begin{remark}\label{rem:SimpleInternalModel}
If 
$J=\langle \gen_{ij} | 1\leq i\leq n, 1\leq j\leq q\rangle$ is principal and its generator has a weakly coprime factorization
$\gen=\frac{n}{d}$, then the internal model to be built into a robustly regulating controller is characterized by the stable element $d$. By Theorem \ref{thm:Simplification}, $d$ is unique up to multiplication by a unit of $\stable$. In this sense, one has a minimal internal model. E.g., in Example \ref{exa:Simplification}, the stable element characterizing the minimal internal model is $d=\frac{(\e^{-s}-1)(s^2+\pi^2)}{(s+1)^2}$. By the first item of Theorem \ref{thm:Simplification}, one may choose $d^{-1}$ to be the internal model even if $n$ and $d$ are not coprime. However, then the internal model is not minimal since $d\inv$ produces stronger instability than $\Gen_r$ is able to generate, or in other words $J\subsetneq\langle d\inv\rangle$.

Provided that the stable element $d$ characterizing the minimal internal model exists, it must divide all elements of the denominator $D$ in a coprime factorization of the controller. By Theorems 7.8 and 7.9 of Lang\cite{Lang2002}, if $\stable$ is a Bezout domain, $d$ is the largest invariant factor of the denominator of the coprime factorization of $\Gen_r$. This shows that Theorem \ref{thm:IMPClassic} corresponds to Lemma 7.5.8 of Vidyasagar\cite{Vidyasagar}, i.e., Theorem \ref{thm:IMP} is a generalization of the classical internal model principle for Bezout domains.
\end{remark}

A consequence of Theorems \ref{thm:IMP} and \ref{thm:Simplification} is that a robustly regulating controller maintains its functionality whenever the signal generator is perturbed so that no additional instabilities are generated. E.g., one can multiply the signal generator by a stable matrix. This can be understood as a limited type of robustness to perturbations of the signal generator and is formulated in the next corollary. One should also note that having a non-minimal internal model in the sense of the above discussion allows some additional unstable dynamics in the perturbed signal generator.

\begin{corollary}\label{cor:PertubGen}
Let a controller $\Cont$ robustly regulate $\Plant$ with the signal generator $\Gen_r=(\gen_{ij})$. Then $\Cont$ robustly regulates $\Plant$ with the signal generator $\Phi_r=(\phi_{ij})$ if $\langle \phi_{ij} | 1\leq i\leq n, 1\leq j\leq q\rangle\subseteq \langle \gen_{ij} | 1\leq i\leq n, 1\leq j\leq q\rangle$.
\end{corollary}

\begin{example}
Assume that $\Cont$ solves the robust regulation problem with the signal generator $\Gen_r$ in Example \ref{exa:Simplification} and let $\gen$ be the generating element found in the example. The controller is then robustly regulating with the signal generator $\gen I$ and Corollary \ref{cor:PertubGen} implies that the controller remains robustly regulating for a perturbed signal generator $\Phi_r$ whenever $\Phi_r=\gen \Phi_0$ for some stable $\Phi_0$. In such signal generators some of the unstable poles may cancel out, but no new unstable poles can appear.
\end{example}

\section{Solvability Conditions and Controller Design}\label{sec:Solv}


Solvability of the robust regulation problem is studied in this section. The main theoretical results give solvability conditions with varying assumptions on the plant and the signal generator. The theoretical results lead to specific controller design procedures and a parametrization of robustly regulating controllers.

The solvability conditions of this section are given for the robust regulation problem and the design procedures can lead to non-causal controllers. Theorem \ref{thm:ExistenceOfCausalControllers} implies that the found controllers are causal whenever the given plant is strictly causal. Because of Remark \ref{rem:RRPvsCRRP}, all controllers found by the design procedures solve the causal robust regulation problem, but the solvability conditions of Theorems \ref{thm:SolvabilityStable}, \ref{thm:Solvability}, \ref{thm:SolvabilityCoprime}, and \ref{thm:SolvabilitySimpleGen} may not be necessary. 
These important observations are repeated in the following remark for future use. Additional observations specific to the different cases studied are presented in the subsections.

\begin{remark}\label{rem:CausalityInDesign}
The following observations apply to the results of this section:
\begin{enumerate}
\item If the given plant $\Plant$ is strictly causal, then the solvability conditions guarantee existence of a causal robustly regulating controller and the controllers constructed by the design procedures are causal.

\item A controller $\Cont$ found by any of the proposed design procedures solves the causal robust regulation problem. However, the solvability conditions of Theorems \ref{thm:SolvabilityStable}, \ref{thm:Solvability}, \ref{thm:SolvabilityCoprime}, and \ref{thm:SolvabilitySimpleGen} may not be necessary for the causal robust regulation problem and further study is needed.
\end{enumerate}
\end{remark}

\subsection{Case I: Stable Plants}

A solvability condition for stable plants is given first. It is inspired by the solvability condition suitable for rational transfer matrices given in Theorem 7.5.2 of Vidyasagar\cite{Vidyasagar}. Here the plant is assumed to be stable, but the signal generator does not need to possess a coprime factorization. The result shows that the robust regulation problem is solvable if and only if the plant does not block the unstable dynamics of the reference signals.


\begin{theorem}\label{thm:SolvabilityStable}
Assume that $\Plant\in\matrices{\stable}$. The robust regulation problem is solvable if and only if for all $i\in\List{\outd}$ and $j\in\List{q}$ the equation
\begin{eqnarray}\label{eq:SolvabilityStable}
I= \gen_{ij}\inv A_{ij}-\Plant B_{ij},
\end{eqnarray}
is solvable by some $A_{ij}\in \stable^{\outd\times \outd}$ and $B_{ij}\in \stable^{\ind\times \outd}$ whenever $\gen_{ij}$ is non-zero.
\end{theorem}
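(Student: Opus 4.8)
The plan is to combine the parametrization of stabilizing controllers for a stable plant with the internal model principle of Theorem~\ref{thm:IMP}. Since $\Plant\in\matrices{\stable}$, the controller $\Cont=0$ stabilizes $\Plant$, so the first part of Theorem~\ref{thm:Stability} (taken with base controller $0$) shows that every stabilizing controller has the form $\Cont=Q(I+\Plant Q)\inv$ with $Q\in\stable^{\ind\times\outd}$ and $\det(I+\Plant Q)\neq 0$, and that then $(I-\Plant\Cont)\inv=I+\Plant Q$. The whole argument rests on the identity $\gen_{ij}\inv A_{ij}=I+\Plant B_{ij}$, which recasts \eqref{eq:SolvabilityStable} as the statement that the sensitivity matrix $I+\Plant B_{ij}$ becomes stable after multiplication by the (possibly unstable) scalar $\gen_{ij}$.

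For necessity I would take a robustly regulating $\Cont$ and set $A_{ij}:=\gen_{ij}(I-\Plant\Cont)\inv$ and $B_{ij}:=\Cont(I-\Plant\Cont)\inv$. Both are stable, the former by Theorem~\ref{thm:GenTFDiv} and the latter because it is a block of the stable closed loop \eqref{eq:ClosedLoop}. The one-line computation $\gen_{ij}\inv A_{ij}-\Plant B_{ij}=(I-\Plant\Cont)(I-\Plant\Cont)\inv=I$ then yields \eqref{eq:SolvabilityStable} (with, incidentally, the same $B_{ij}$ for every pair).

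The sufficiency direction carries the real difficulty: each pair $(i,j)$ supplies its own $A_{ij},B_{ij}$, and these must be fused into one Youla parameter $Q$. The naive choice $Q=\sum_{ij}B_{ij}$ fails, because the cross terms $\gen_{kl}\Plant B_{ij}$ with $(i,j)\neq(k,l)$ carry the instability of $\gen_{kl}$ and need not be stable. The key idea is to multiply rather than add: order the finitely many nonzero pairs and define $I+\Plant Q:=\prod_{(i,j)}(I+\Plant B_{ij})$. Every factor has the shape $I+\Plant B$ with $B\in\matrices{\stable}$, and such matrices are closed under multiplication, so indeed $Q\in\stable^{\ind\times\outd}$. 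Because $\gen_{kl}$ is a scalar it slides next to its own factor, giving $\gen_{kl}(I+\Plant Q)=(\cdots)\,\gen_{kl}(I+\Plant B_{kl})\,(\cdots)=(\cdots)A_{kl}(\cdots)\in\matrices{\stable}$, all the remaining factors being stable. Then Theorem~\ref{thm:IMP} applies through $\gen_{kl}I=\gen_{kl}(I+\Plant Q)-\big(\gen_{kl}(I+\Plant Q)\big)\Plant\Cont$, whose two coefficients are stable, so $\Cont=Q(I+\Plant Q)\inv$ robustly regulates $\Plant$.

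The one loose end, which I expect to be the main technical obstacle, is admissibility of the parameter: I need $\det(I+\Plant Q)=\prod_{ij}\det(I+\Plant B_{ij})\neq 0$ so that $\Cont=Q(I+\Plant Q)\inv$ is a genuine stabilizing controller. To secure this I would first replace each $A_{ij},B_{ij}$ by the output of Lemma~\ref{lem:exInvA} applied to $I=\gen_{ij}\inv A_{ij}+\Plant(-B_{ij})$, which produces a solution with $A_{ij}$ invertible over $\fractions{\stable}$; then $\det(I+\Plant B_{ij})=\det(\gen_{ij}\inv A_{ij})\neq 0$, and a product of nonzero elements of the integral domain $\stable$ is nonzero.
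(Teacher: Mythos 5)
Your proposal is correct and follows essentially the same route as the paper: the necessity half is identical, and your product $\prod_{(i,j)}(I+\Plant B_{ij})$ is exactly the paper's $\mathcal{A}_k=\prod_i f_i\inv A_i$ (with your $Q$ being its $\mathcal{B}_k$), including the same use of Lemma~\ref{lem:exInvA} to secure invertibility and the same final appeal to Theorem~\ref{thm:IMP} after sliding the scalar $\gen_{kl}$ onto its own factor. The only cosmetic difference is that you certify stabilization via the Youla parametrization of Theorem~\ref{thm:Stability} with base controller $0$, where the paper cites Proposition 6 of \cite{Quadrat2006b}.
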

\begin{proof}
In order to show necessity, assume that $\Cont$ is a robustly regulating controller. Then $A_{ij}:=\gen_{ij}(I-\Plant\Cont)\inv\in\matrices{\stable}$ by Theorem \ref{thm:GenTFDiv} and $B_{ij}:=\Cont(I-\Plant\Cont)\inv\in\matrices{\stable}$ since $\Cont$ is stabilizing. It follows that
\begin{eqnarray*}
I=\gen_{ij}\inv \gen_{ij}(I-\Plant\Cont)\inv-\Plant\Cont(I-\Plant\Cont)\inv=\gen_{ij}\inv A_{ij}-\Plant B_{ij}.
\end{eqnarray*}
It remains to show sufficiency. For simplicity, reorder the elements $\gen_{ij}$ of the signal generator to $f_l$ where $l\in\List{k}$ with $k=\outd q$ and write \eqref{eq:SolvabilityStable} in the form
\begin{eqnarray}
\label{eq:SolvabilityStableSimplified}
I= f_{l}\inv A_{l}-\Plant B_{l}.
\end{eqnarray} 
By Lemma \ref{lem:exInvA}, without loss of generality, one may assume that $A_l$ is invertible over $\fractions{\stable}$. Since $\Plant B_l$ is stable as a product of two stable matrices, \eqref{eq:SolvabilityStableSimplified} reveals that $f_l\inv A_l\in\matrices{\stable}$ for every $l\in\List{k}$. Thus, for $r\in\List{k}$, $\mathcal{A}_r:=\prod_{l=0}^{r-1} f_{r-l}\inv A_{r-l}$ and $\mathcal{B}_r:=B_1+\sum_{h=2}^{r} B_h \mathcal{A}_{h-1}$ are stable matrices. In addition, one can show by induction that
\begin{eqnarray}\label{eq:StabStab}
I=\mathcal{A}_r-\Plant\mathcal{B}_r
\end{eqnarray}
for all $1\leq r\leq k$. Set $\mathcal{A}=\mathcal{A}_k$ and $\mathcal{B}=\mathcal{B}_k$. Since $\mathcal{A},\mathcal{A}\Plant,\mathcal{B},\mathcal{B}\Plant\in\matrices{\stable}$ and \eqref{eq:StabStab} holds for $r=k$, Proposition 6 of Quadrat\cite{Quadrat2006b} implies that $\Cont=\mathcal{B}\mathcal{A}\inv$ is stabilizing for $\Plant$.

It remains to show that $\Cont$ contains an internal model. Since $\mathcal{A} = (I-\Plant\Cont)\inv$, the equation \eqref{eq:StabStab} is of the form 
$
I  
=\mathcal{A}-\mathcal{A}\Plant\Cont
$. 
Multiplying it by $f_l$ and observing that $f_l\mathcal{A}\in\matrices{\stable}$ shows that \eqref{eq:IMP} holds and the claim follows by Theorem \ref{thm:IMP}.
\end{proof}
The above theorem and its proof implies the following controller design procedure for a stable plant. The idea is to construct the unstable dynamics generated by the signal generator into the controller element by element.

\begin{design}\label{des:DesignStable}
Define the controller 
$
\Cont=\mathcal{B}\mathcal{A}\inv
$ 
where the stable parameters $\mathcal{B}$ and $\mathcal{A}$ are chosen in the following way:
\begin{enumerate}[\bf Step 1:]
\item Find a set of non-zero elements $f_1,\ldots,f_k\in\fractions{\stable}$ such that
$
\langle f_1,\ldots,f_k\rangle=\langle \gen_{ij} | 1\leq i\leq n, 1\leq j\leq q\rangle.
$
\item Set $\mathcal{A}_0=I$, $\mathcal{B}_0=0$, and $l=1$. 
\item If possible, find $A_l, B_l\in\matrices{\stable}$ such that 
$
I= f_l\inv A_{l}-\Plant B_{l}
$ 
and $\det(A_l)\neq 0$. Define $\mathcal{A}_l:=f_l\inv A_{l}\mathcal{A}_{l-1}$ and $\mathcal{B}_l:=\mathcal{B}_{l-1}+ B_l \mathcal{A}_{l-1}$.
If such matrices cannot be found, end the procedure since the robust regulation problem is not solvable.

\item If $l=k$, set $\mathcal{A}=\mathcal{A}_l$ and $\mathcal{B}=\mathcal{B}_l$ and end the procedure. Otherwise, set $l=l+1$ and return to Step 3.
\end{enumerate}
\end{design}

\begin{remark}\label{rem:SimplificationInDesign}
Obviously one can always use the elements $\gen_{ij}$ of the signal generator in Step 1. However, the significance of Step 1 is that one can get rid of the unstable dynamics shared by one or more elements of the signal generator. If this is not done and the elements $\gen_{ij}$ share some unstable dynamics or if the generating elements $f_l$ are not chosen with care, the procedure would result into an oversized internal model, since the same unstable dynamics are constructed into the controller repeatedly. This increases the size of a state-space realization of the controller.

In order to illustrate this, consider Example \ref{exa:Simplification}. The unstable pole of order one at $s=0$ appears in two elements of the signal generator. This pole would appear as a second order pole in the controller without the first step. The simplified internal model, i.e., the element $\frac{(s+1)^2}{(\e^{-s}-1)(s^2+\pi^2)}$, only has a first order pole at $s=0$. Consequently, only a first order pole at $s=0$ is required in the controller.

It may not be easy to find the generating elements $f_l$ 
so that the same unstable dynamics are not repeated. At least they should form a minimal generating set and finding one is not a trivial task. Having a minimal generating set may not be enough as illustrated by Example \ref{exa:Simplification} since the two non-zero elements of the signal generator form such a set for the corresponding fractional ideal. The situation is clear if one finds a single generating elements $f_1$ with a weakly coprime factorization as explained in Remark \ref{rem:SimpleInternalModel}.
\end{remark}

\begin{remark}\label{rem:SolvabilityInDesign}
By Theorem \ref{thm:Simplification}, the above design procedure can be completed whenever the robust regulation problem is solvable since the fractional ideals in the first step are equal. Then the solvability conditions are checked in Step 3 and the failure of this step means that the robust regulation problem is not solvable. One can choose $\langle f_1,\ldots,f_k\rangle$ having $\langle \gen_{ij} | 1\leq i\leq n, 1\leq j\leq q\rangle$ as its proper subset in Step 1, but in that case the failure of Step 3 would not necessarily imply that the robust regulation problem is not solvable. Another downside of having inequality is that the resulting internal model is oversized if the procedure is successful.
\end{remark}

\begin{remark}
One may add the internal model of new unstable dynamics into an existing controller using Design procedure 1. First observe that the existing controller gives the solution to \eqref{eq:SolvabilityStable} with the old dynamics by the proof of Theorem \ref{thm:SolvabilityStable}. The remaining task is to repeat Step 3 of the procedure with the unstable dynamics to be added into the controller. 
\end{remark}


\begin{remark}\label{rem:Design1Causality}
A matrix is causal if it has a factorization whose denominator is $\primeideal$-nonsingular.\cite{MoriAbe2001} The product of $\primeideal$-nonsingular matrices is $\primeideal$-nonsingular, so the controller found using Design procedure 1 is causal if the matrices $A_l$ can be chosen so that $f_l\inv A_l$ are $\primeideal$-nonsingular.
\end{remark}

\subsection{Case II: General Plants}

The following is the main result of this section, and it gives a necessary and sufficient condition for the solvability of the robust regulation problem using no coprime  factorizations. It results to a straightforward controller design procedure, where the plant is first stabilized and then a stabilizing controller containing the internal model is determined by exploiting the parametrization of all stabilizing controllers.

\begin{theorem}\label{thm:Solvability}
Let $\Cont_s$ stabilize $\Plant$ and write $\gen_{ij}=\frac{n_{ij}}{d_{ij}}$. Denote $U=(I-\Plant\Cont_s)\inv$, $V=\Cont_s(I-\Plant\Cont_s)\inv$, $\widetilde{L}=\begin{bmatrix}
-V & (I-\Cont_s\Plant)\inv\end{bmatrix}$, and $L=\begin{bmatrix}
U^T & V^T
\end{bmatrix}^T$.
The robust regulation problem is solvable if and only if the system of equations
\begin{eqnarray}\label{eq:Solvability}
\left(d_{ij}\widetilde{A}_{ij}-\begin{bmatrix}
n_{ij}I & 0
\end{bmatrix}
\right)\left(I+\begin{bmatrix}
P\widetilde{L}\\\widetilde{L}
\end{bmatrix}W \right)L=0
\end{eqnarray}
where $i\in\List{\outd}$ and $j\in\List{q}$, is solvable by $\widetilde{A}_{ij}\in \stable^{\outd\times (\outd+\ind)}$ and $W\in\stable^{(\outd+\ind)\times (\outd+\ind)}$ such that $\det(U+P\widetilde{L}WL)\neq 0$.
\end{theorem}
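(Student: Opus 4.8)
The plan is to combine the complete parametrization of stabilizing controllers in Theorem~\ref{thm:Stability} with the internal model principle of Theorem~\ref{thm:IMP}, turning ``there exists a robustly regulating controller'' into an algebraic condition on the free parameter $W$. The matrices $U$, $V$, $\widetilde{L}$, $L$ are precisely those of the first part of Theorem~\ref{thm:Stability} for the base controller $\Cont_s$; hence every controller that stabilizes $\Plant$ is $\Cont(W)$ for some $W\in\matrices{\stable}$ meeting the determinant conditions there, and conversely each such $\Cont(W)$ stabilizes $\Plant$.

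Writing the right factorization $\Cont(W)=N(W)D(W)\inv$ with $N(W)=V+\widetilde{L}WL$ and $D(W)=U+\Plant\widetilde{L}WL$, the bridge to \eqref{eq:Solvability} is the identity
\[
\begin{bmatrix} D(W)\\ N(W)\end{bmatrix}=\left(I+\begin{bmatrix}\Plant\widetilde{L}\\ \widetilde{L}\end{bmatrix}W\right)L,
\]
whose right-hand side is exactly the factor multiplying $\bigl(d_{ij}\widetilde{A}_{ij}-[\,n_{ij}I\ \ 0\,]\bigr)$ in \eqref{eq:Solvability}. Recognizing this is the key step.

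Partition $\widetilde{A}_{ij}=[\,A_{ij}\ \ B_{ij}\,]$ into an $n\times n$ and an $n\times m$ block. Then \eqref{eq:Solvability} reads $(d_{ij}A_{ij}-n_{ij}I)D(W)+d_{ij}B_{ij}N(W)=0$. Since $\det D(W)=\det(U+\Plant\widetilde{L}WL)\neq0$, I divide by $d_{ij}$ and multiply on the right by $D(W)\inv$, obtaining $\gen_{ij}I=A_{ij}+B_{ij}\Cont(W)$, i.e.\ exactly the internal model condition \eqref{eq:IMP}; running this computation backwards (multiply \eqref{eq:IMP} on the right by $D(W)$, then by $d_{ij}$) shows the two conditions are equivalent for each $i,j$. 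Consequently \eqref{eq:Solvability} is solvable by stable $\widetilde{A}_{ij},W$ (with the determinant condition) if and only if $\Cont(W)$ is a stabilizing controller satisfying \eqref{eq:IMP} for all $i\in\List{\outd}$, $j\in\List{q}$. By Theorem~\ref{thm:IMP} the latter means $\Cont(W)$ robustly regulates $\Plant$, and since Theorem~\ref{thm:Stability} realizes every stabilizing controller as some $\Cont(W)$, solvability of \eqref{eq:Solvability} is equivalent to solvability of the robust regulation problem.

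The step needing the most care is reconciling the single determinant condition in the statement with the two conditions of Theorem~\ref{thm:Stability}, which is what legitimizes $\Cont(W)$ as a stabilizing controller in the sufficiency direction. I expect to prove these coincide: from $(I-\Plant\Cont_s)\Plant=\Plant(I-\Cont_s\Plant)$ and Sylvester's determinant identity $\det(I+XY)=\det(I+YX)$, one gets $\det(U+\Plant\widetilde{L}WL)\neq0$ if and only if $\det((I-\Cont_s\Plant)\inv+\widetilde{L}WL\Plant)\neq0$, so one inequality controls both. Once this is in place and $D(W)$ is known invertible, clearing the denominators $d_{ij}$ and passing between \eqref{eq:Solvability} and \eqref{eq:IMP} is routine.
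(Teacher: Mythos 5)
Your proposal is correct and takes essentially the same route as the paper's proof: partition $\widetilde{A}_{ij}=[\,A_{ij}\ \ B_{ij}\,]$, recognize that $\bigl(I+\left[\begin{smallmatrix}\Plant\widetilde{L}\\ \widetilde{L}\end{smallmatrix}\right]W\bigr)L$ stacks the factors $U+\Plant\widetilde{L}WL$ and $V+\widetilde{L}WL$ of the parametrized controller, and translate \eqref{eq:Solvability} into the internal model condition \eqref{eq:IMP} via Theorems \ref{thm:Stability} and \ref{thm:IMP}. Your additional check that the single determinant condition in the statement controls both conditions of Theorem \ref{thm:Stability} (via $\det(I+XY)=\det(I+YX)$ and $U\inv\Plant=\Plant(I-\Cont_s\Plant)$) is a point the paper leaves implicit, but it does not alter the argument.
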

\begin{proof}
Writing $\widetilde{A}_{ij}=\begin{bmatrix}
A_{ij} & B_{ij}\end{bmatrix}$ where $A_{ij}\in \stable^{\outd\times \outd}$ and $B_{ij}\in \stable^{\outd\times \ind}$, the equation \eqref{eq:Solvability} can be written in the form
\begin{eqnarray*}
\begin{bmatrix}
d_{ij} A_{ij}-n_{ij}I & d_{ij} B_{ij}
\end{bmatrix}\begin{bmatrix}
U+P\widetilde{L}W L\\V+\widetilde{L}W L
\end{bmatrix}=0,
\end{eqnarray*}
which is equivalent to
\begin{eqnarray}\label{eq:SolvabilityUnstable}
\gen_{ij} I=A_{ij}+B_{ij}(V+\widetilde{L}WL)(U+\Plant\widetilde{L}WL)\inv
\end{eqnarray}
if $\det(U+P\widetilde{L}WL)\neq 0$. The result follows by observing that any stabilizing controller $\Cont$ can be expressed in the form $\Cont=(V+\widetilde{L}WL)(U+\Plant\widetilde{L}WL)\inv$ by Theorem \ref{thm:Stability} and that \eqref{eq:SolvabilityUnstable} is equivalent of it being robustly regulating by Theorem \ref{thm:IMP}.
\end{proof}

If $\langle\gen_{ij} | 1\leq i\leq n, 1\leq j\leq q\rangle\subseteq \langle f_1,\ldots,f_k\rangle$, then Theorem \ref{thm:Simplification} implies that solving \eqref{eq:Solvability} for every $f_i$ instead of $\gen_{ij}$ results to a robustly regulating controller. 
Writing the system of equation \eqref{eq:Solvability} in the matrix form \eqref{eq:SolvabilityMatrix} leads to the following design procedure.

\begin{design}\label{des:DesignGeneral}
Define the controller
\begin{eqnarray}\label{eq:RobCont}
\Cont=(V+\widetilde{L}WL)(U+\Plant\widetilde{L}WL)\inv
\end{eqnarray}
where the parameters $U$, $L$, $\widetilde{L}$, and $W$ are chosen by the following procedure:
\begin{enumerate}[\bf Step 1:]
\item Find a set of non-zero elements $f_1,\ldots,f_k\in\fractions{\stable}$ such that
$
\langle f_1,\ldots,f_k\rangle=\langle \gen_{ij} | 1\leq i\leq n, 1\leq j\leq q\rangle.
$
\item Find a stabilizing controller $\Cont_s$ for $\Plant$ 
and let $U$, $V$, $\widetilde{L}$, and $L$ be as in Theorem \ref{thm:Solvability}.
\item Write $f_i=n_i/d_i$ and find the parameter $W$ by solving the matrix equation
\begin{eqnarray}\label{eq:SolvabilityMatrix}
\left(\begin{bmatrix}
d_1 I & 0 & \dots & 0\\
0 & d_2 I & \dots & 0\\
\vdots & \vdots & \ddots & \vdots\\
0 & 0 & \dots & d_k I
\end{bmatrix}\begin{bmatrix}
\widetilde{A}_1\\
\widetilde{A}_2\\
\vdots\\
\widetilde{A}_{k}
\end{bmatrix}+
\begin{bmatrix}
n_1 I & 0\\
n_2 I & 0\\
\vdots & \vdots\\
n_{k} I & 0
\end{bmatrix}
\right)\left(I+\begin{bmatrix}
\Plant\widetilde{L}\\\widetilde{L}
\end{bmatrix}
W
\right)L=0.
\end{eqnarray}
\end{enumerate}
\end{design}

Remark 
\ref{rem:SolvabilityInDesign} concerning the first step applies to the above design procedure. In addition, the size of the matrix equation \eqref{eq:SolvabilityMatrix} is reduced if the number of elements $f_l$ in the first step is small.

\begin{remark}
The point of using equations \eqref{eq:Solvability} and \eqref{eq:SolvabilityMatrix} instead of \eqref{eq:SolvabilityUnstable} is that they involve only stable matrices. 
Then the proposed design procedure requires solving a matrix equation of the form $(M_1+M_2 X)(M_3+M_4 Y)M_5=0$ over the ring $\stable$ where the matrices $X,Y$ are to be solved.
\end{remark}

The following necessary solvability condition shows that the robust regulation problem is solvable only if the plant does not block the unstable dynamics produced by the signal generator. The condition is not sufficient since it does not address the stabilizability of $\Plant$ in anyway.

\begin{theorem}\label{thm:SolvabilityNecessary}
Assume that the robust regulation problem is solvable. Then for all $i\in\List{\outd}$ and $j\in\List{q}$ the equation 
$
\gen_{ij}I= A_{ij}-\gen_{ij}\Plant B_{ij}
$ 
is solvable by some $A_{ij}\in \stable^{\outd\times \outd}$ and $B_{ij}\in \stable^{\ind\times \outd}$.
\end{theorem}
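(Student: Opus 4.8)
The plan is to start from a robustly regulating controller $\Cont$, whose existence is guaranteed by the solvability hypothesis, and to read off the required matrices $A_{ij}$ and $B_{ij}$ directly from the blocks of the closed loop transfer matrix \eqref{eq:ClosedLoop}. The argument mirrors the necessity part of Theorem \ref{thm:SolvabilityStable}, and the point worth emphasizing is that no stability of $\Plant$ is used in this direction, so the condition is genuinely necessary for general plants.

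First I would set $A_{ij}:=\gen_{ij}(I-\Plant\Cont)\inv$ and $B_{ij}:=\Cont(I-\Plant\Cont)\inv$. The matrix $A_{ij}$ is stable because $\gen_{ij}(I-\Plant\Cont)\inv\in\matrices{\stable}$ by Theorem \ref{thm:GenTFDiv} applied to the robustly regulating $\Cont$; it is precisely the right-hand block appearing in the divisibility condition \eqref{eq:GenTFDiv}. The matrix $B_{ij}$ is stable because it is the $(2,1)$ block $\Cont(I-\Plant\Cont)\inv$ of the closed loop transfer matrix \eqref{eq:ClosedLoop}, which lies in $\matrices{\stable}$ since $\Cont$ stabilizes $\Plant$. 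Both matrices then have the required sizes $\outd\times\outd$ and $\ind\times\outd$.

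It remains to verify the identity, which is a one-line computation using $\Plant B_{ij}=\Plant\Cont(I-\Plant\Cont)\inv$ and factoring $(I-\Plant\Cont)\inv$ to the right:
\begin{align*}
A_{ij}-\gen_{ij}\Plant B_{ij}=\gen_{ij}(I-\Plant\Cont)\inv-\gen_{ij}\Plant\Cont(I-\Plant\Cont)\inv=\gen_{ij}(I-\Plant\Cont)(I-\Plant\Cont)\inv=\gen_{ij}I.
\end{align*}
Since $i\in\List{\outd}$ and $j\in\List{q}$ were arbitrary, this establishes solvability of the stated equation in every case; when $\gen_{ij}=0$ the construction simply returns $A_{ij}=0$, which is trivially admissible.

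The only mild obstacle is the correct identification of the two stable building blocks: recognizing that $A_{ij}$ must be drawn from the stronger divisibility condition \eqref{eq:GenTFDiv} of Theorem \ref{thm:GenTFDiv} rather than from stabilization alone, whereas $B_{ij}$ comes from stabilization alone. This asymmetry is exactly what makes the condition necessary but not sufficient, as noted after the statement: the equation encodes only that $\Plant$ does not block the unstable dynamics of the signal generator and carries no information about the stabilizability of $\Plant$.
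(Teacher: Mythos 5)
Your proposal is correct and follows essentially the same route as the paper: the paper's proof also sets $A_{ij}=\gen_{ij}(I-\Plant\Cont)\inv$ and $B_{ij}=\Cont(I-\Plant\Cont)\inv$, obtains their stability from Theorem \ref{thm:GenTFDiv} and from stabilization respectively, and verifies the identity by the same one-line factorization. Your additional remarks on the asymmetry of the two blocks and the non-sufficiency of the condition are accurate but not needed for the argument.
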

\begin{proof}
If $\Cont$ is a robustly regulating controller, then $\gen_{ij}I=\gen_{ij}(I-\Plant\Cont)\inv-\gen_{ij}\Plant\Cont(I-\Plant\Cont)\inv$ where
$\gen_{ij}(I-\Plant\Cont)\inv\in\matrices{\stable}$ and $\Cont(I-\Plant\Cont)\inv\in\matrices{\stable}$.
\end{proof}

The next theorem generalizes the solvability condition of stable plants given in Theorem \ref{thm:SolvabilityStable} to general plants. This condition is only sufficient.
The restatement of this result for systems with a coprime factorization given later in Theorem \ref{thm:SolvabilityCoprime} is both necessary and sufficient. Roughly speaking, the idea is that finding a numerator of the plant that does not block the unstable dynamics produced by the signal generator guarantees existence of a robustly regulating controller.

\begin{theorem}\label{thm:SolvabilitySufficient}
The robust regulation problem is solvable if there exists a stabilizing controller $\Cont_s$ of $\Plant$ such that for all $i\in\List{\outd}$ and $j\in\List{q}$, the equation
\begin{eqnarray}\label{eq:SolvabilitySufficient}
\gen_{ij} I= A_{ij}-\gen_{ij}\Plant(I-\Cont_s\Plant)\inv B_{ij}
\end{eqnarray}
is solvable by some $A_{ij}\in \stable^{\outd\times \outd}$ and $B_{ij}\in \stable^{\ind\times \outd}$.
\end{theorem}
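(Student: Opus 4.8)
The plan is to reduce the problem to the already-solved stable-plant case of Theorem \ref{thm:SolvabilityStable} by means of the two-stage stabilization of Lemma \ref{lem:TwoStageStab}. The first observation is that the matrix $\Plant_0:=\Plant(I-\Cont_s\Plant)\inv$ is stable: using the identity $\Plant(I-\Cont_s\Plant)\inv=(I-\Plant\Cont_s)\inv\Plant$, it coincides with the load disturbance sensitivity matrix of the loop formed by $\Plant$ and $\Cont_s$, which is stable because $\Cont_s$ stabilizes $\Plant$. For each nonzero $\gen_{ij}$, multiplying \eqref{eq:SolvabilitySufficient} through by $\gen_{ij}\inv$ rewrites it as $I=\gen_{ij}\inv A_{ij}-\Plant_0 B_{ij}$, which is precisely the solvability condition \eqref{eq:SolvabilityStable} of Theorem \ref{thm:SolvabilityStable} applied to the stable plant $\Plant_0$ (the zero elements are handled trivially). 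Hence that theorem furnishes a controller $\Cont_0$ that robustly regulates $\Plant_0$.

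Next I would set $\Cont:=\Cont_s+\Cont_0$ and argue that it robustly regulates $\Plant$. By Lemma \ref{lem:TwoStageStab} the controller $\Cont$ stabilizes $\Plant$, so by Theorem \ref{thm:GenTFDiv} it suffices to verify the divisibility conditions $\gen_{ij}(I-\Plant\Cont)\inv\in\matrices{\stable}$ and $\gen_{ij}(I-\Plant\Cont)\inv\Plant\in\matrices{\stable}$ for all $i\in\List{\outd}$ and $j\in\List{q}$. The technical crux is the sensitivity factorization
\begin{align*}
(I-\Plant\Cont)\inv=(I-\Plant_0\Cont_0)\inv(I-\Plant\Cont_s)\inv,
\end{align*}
which follows from the identity $I-\Plant(\Cont_s+\Cont_0)=(I-\Plant\Cont_s)(I-\Plant_0\Cont_0)$ after again invoking $(I-\Plant\Cont_s)\inv\Plant=\Plant_0$.

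With this factorization in hand the verification is immediate. Since $\Cont_0$ robustly regulates the stable plant $\Plant_0$, Theorem \ref{thm:GenTFDiv} gives $\gen_{ij}(I-\Plant_0\Cont_0)\inv\in\matrices{\stable}$. Then $\gen_{ij}(I-\Plant\Cont)\inv=\bigl(\gen_{ij}(I-\Plant_0\Cont_0)\inv\bigr)(I-\Plant\Cont_s)\inv$ is a product of stable matrices, and $\gen_{ij}(I-\Plant\Cont)\inv\Plant=\bigl(\gen_{ij}(I-\Plant_0\Cont_0)\inv\bigr)\Plant_0$ is likewise stable because $\Plant_0$ is stable. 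Both conditions of Theorem \ref{thm:GenTFDiv} then hold, so $\Cont$ is robustly regulating and the robust regulation problem is solvable. I expect the main obstacle to be showing that the combined controller $\Cont_s+\Cont_0$ inherits the internal model: the essential step is getting the sensitivity factorization right and recognizing that $\Plant_0$ plays the role of the stable plant throughout, after which the divisibility conditions collapse into products of stable matrices.
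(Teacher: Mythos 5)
Your proposal is correct and follows essentially the same route as the paper's proof: reduce to Theorem \ref{thm:SolvabilityStable} applied to the stable plant $\Plant_0=\Plant(I-\Cont_s\Plant)\inv$, form the sum controller, invoke Lemma \ref{lem:TwoStageStab} for stabilization, and verify the conditions of Theorem \ref{thm:GenTFDiv} via the factorization $(I-\Plant\Cont)\inv=(I-\Plant_0\Cont_0)\inv(I-\Plant\Cont_s)\inv$. The only cosmetic difference is that you spell out the identity $I-\Plant(\Cont_s+\Cont_0)=(I-\Plant\Cont_s)(I-\Plant_0\Cont_0)$ explicitly, which the paper leaves implicit.
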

\begin{proof}
By Theorem \ref{thm:SolvabilityStable}, the equation \eqref{eq:SolvabilitySufficient} implies that there exists a controller $\Cont_r$ that robustly regulates $\Plant_0=\Plant(I-\Cont_s\Plant)\inv$. The claim follows if one can show that $\Cont=\Cont_s+\Cont_r$ is robustly regulating for $\Plant$. Lemma \ref{lem:TwoStageStab} implies that $\Cont$ is stabilizing. Now
\begin{eqnarray*}
\gen_{ij}(I-\Plant\Cont)\inv & =&\gen_{ij}(I-(I-\Plant\Cont_s)\inv\Plant\Cont_r)\inv(I-\Plant\Cont_s)\inv\\
& = & \gen_{ij}(I-\Plant_0\Cont_r)\inv(I-\Plant\Cont_s)\inv\in\matrices{\stable}
\end{eqnarray*}
since $\Cont_s$ stabilizes $\Plant$ and $\Cont_r$ robustly regulates $\Plant_0$, i.e., $\gen_{ij}(I-\Plant_0\Cont_r)\inv\in\matrices{\stable}$. The matrix in the above equation remains stable if it is multiplied by $\Plant$ from the right since $(I-\Plant\Cont_s)\inv P\in\matrices{\stable}$. Theorem \ref{thm:GenTFDiv} implies that $\Cont$ solves the robust regulation problem.
\end{proof}

The above theorem shows that one way of finding a robust controller is the two-stage controller design where one first stabilizes the plant and then constructs a robustly regulating controller for the stabilized plant. One way of completing the second part is given in Design procedure \ref{des:DesignStable}. Combining the two controllers leads to a robustly regulating controller for the given plant. This idea is summarized by the following controller design procedure.

\begin{design}\label{des:DesignSufficient}
Define the controller 
$
\Cont=\Cont_s+\Cont_r
$ 
where $\Cont_s$ and $\Cont_r$ are chosen in the following way:
\begin{enumerate}[\bf Step 1:]
\item Find a stabilizing controller $\Cont_s$ for $\Plant$.
\item Find a robustly regulating controller $\Cont_r$ for $P_0=\Plant(I-\Cont_s\Plant)\inv$.
\end{enumerate}
\end{design}

\begin{remark}
The above design procedure may fail 
because the stabilizing controller in the first step may already contain a partial internal model causing the stable plant $\Plant_0$ to block some unstable dynamics of the reference signals and, consequently, Step 2 to fail. In such a case one may be able to complete the last step by ignoring the unstable dynamics already appearing in the stabilizing controller as is done in Example \ref{exa:Delay}.
\end{remark}

\begin{remark}\label{rem:Design3Causality}
The controller found by using Design procedure 3 is causal if the stabilizing controller $\Cont_s$ and the robustly regulating controller $\Cont_r$ are both causal. The observations made in Remarks \ref{rem:CausalityInDesign} and \ref{rem:Design1Causality} apply to finding the causal robust controller.
\end{remark}

\subsection{Case III: Plants with Right Coprime Factorizations}

The following theorem generalizes the solvability condition of Theroem \ref{thm:SolvabilityStable} to plants with a coprime factorization. The theorem generalizes the solvability condition of Laakkonen \cite{Laakkonen2016} by allowing a general signal generator.

\begin{theorem}\label{thm:SolvabilityCoprime}
Provided that a stabilizable plant $\Plant$ has a right coprime factorization $\Plant=ND^{-1}$, the robust regulation problem is solvable if and only if for every non-zero $\gen_{ij}$ where $i\in\List{\outd}$ and $j\in\List{q}$, the equation
\begin{eqnarray}\label{eq:SolvabilityCoprime}
I=\gen_{ij}\inv A_{ij}-N B_{ij}
\end{eqnarray}
is solvable by some $A_{ij}\in \stable^{\outd\times \outd}$ and $B_{ij}\in \stable^{\ind\times \outd}$.
\end{theorem}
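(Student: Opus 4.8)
The plan is to prove the two implications separately, exploiting the right coprime factorization $\Plant=\num\den\inv$ together with the key observation that the numerator $\num\in\matrices{\stable}$ behaves like a \emph{stable} plant, so that Theorem~\ref{thm:SolvabilityStable} can be brought to bear. Necessity is the easier half and I would do it first.

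\emph{Necessity.} Suppose $\Cont$ robustly regulates $\Plant$. Since $\Cont$ stabilizes $\Plant$ and $\Plant=\num\den\inv$ is right coprime, standard factorization theory provides a left coprime factorization $\Cont=\Dcl\inv\Ncl$ whose return difference $\Delta:=\Dcl\den-\Ncl\num$ is a unit over $\matrices{\stable}$ (cf.\ \cite{Quadrat2006b,Vidyasagar}; this is exactly where the existence of a right coprime factorization is used). A short computation from $I-\Cont\Plant=\Dcl\inv\Delta\den\inv$ then gives
\begin{align*}
(I-\Plant\Cont)\inv=I+\num\Delta\inv\Ncl .
\end{align*}
By Theorem~\ref{thm:GenTFDiv} the matrix $A_{ij}:=\gen_{ij}(I-\Plant\Cont)\inv$ is stable, and substituting the identity above yields $A_{ij}=\gen_{ij}I+\gen_{ij}\num\,(\Delta\inv\Ncl)$. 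Multiplying by $\gen_{ij}\inv$ reproduces \eqref{eq:SolvabilityCoprime} with this $A_{ij}$ and $B_{ij}:=\Delta\inv\Ncl\in\matrices{\stable}$, stable because $\Delta$ is a unit.

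\emph{Sufficiency.} Conversely, assume \eqref{eq:SolvabilityCoprime} holds for every non-zero $\gen_{ij}$. Because $\num\in\matrices{\stable}$, \eqref{eq:SolvabilityCoprime} is precisely the solvability condition of Theorem~\ref{thm:SolvabilityStable} for the stable plant $\num$; hence the robust regulation problem for $\num$ is solvable. Following the construction in the proof of Theorem~\ref{thm:SolvabilityStable} (and using Lemma~\ref{lem:exInvA}) this yields stable $\mathcal{A},\mathcal{B}$ with $\mathcal{A}$ invertible over $\fractions{\stable}$ and, writing $\mathcal A=(I-\num\Cont_{\num})\inv$ for a robustly regulating controller $\Cont_{\num}$ of $\num$,
\begin{align*}
I=\mathcal{A}-\num\mathcal{B},\qquad \gen_{ij}\mathcal{A}\in\matrices{\stable},\qquad \gen_{ij}\mathcal{A}\num\in\matrices{\stable}
\end{align*}
for all $i,j$. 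It remains to \emph{lift} this data to a controller for $\Plant=\num\den\inv$. I would search, via the Youla parametrization of Theorem~\ref{thm:Stability} and the Bezout identity $X\num+Y\den=I$, for a stabilizing $\Cont=\Dcl\inv\Ncl$ with $\Dcl\den-\Ncl\num=I$ whose closed loop reproduces the stable quantities above (morally $I+\num\Ncl=\mathcal A$, with $\num\Dcl$ inheriting the stability of $\gen_{ij}\mathcal A\num$); Theorem~\ref{thm:GenTFDiv} (equivalently Theorem~\ref{thm:IMP}) would then certify robust regulation.

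\emph{Main obstacle.} The crux is exactly this lifting. Installing the internal model in the stable numerator $\num$ is easy, but the factor $\den\inv$ carries the genuine instability of $\Plant$, and the controller must cancel it \emph{simultaneously} with carrying the internal model. A naive transplant such as $\Cont=\den\mathcal B\mathcal A\inv$ does give $(I-\Plant\Cont)\inv=\mathcal A$, yet fails to stabilize $\Plant$: the block $\mathcal A\Plant=\mathcal A\num\den\inv$ need not be stable. Overcoming this forces an essential use of the coprimeness of $(\num,\den)$—through the Bezout identity and the free Youla parameter $Z$ in $\Ncl=\Ncl^{s}+Z\lden$, $\Dcl=\Dcl^{s}+Z\lnum$ relative to a fixed stabilizing $\Cont_s$—to absorb the poles of $\den\inv$ without disturbing the stable combinations $\gen_{ij}\mathcal A$ and $\gen_{ij}\mathcal A\num$. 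This is precisely the ingredient absent from the merely sufficient Theorem~\ref{thm:SolvabilitySufficient}, and is what upgrades \eqref{eq:SolvabilityCoprime} to a necessary \emph{and} sufficient condition in the coprime case; as a consistency check, in the SISO case it collapses to the classical requirement that the plant numerator and the unstable dynamics of the reference share no common zero.
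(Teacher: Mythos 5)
Your necessity argument is correct and is essentially the paper's: the paper also takes a left coprime factorization $\Cont_s=X\inv Y$ of the robustly regulating controller normalized so that $XD-YN=I$, writes $I=(I-\Plant\Cont_s)\inv-\Plant\Cont_s(I-\Plant\Cont_s)\inv=\gen_{ij}\inv\bigl(\gen_{ij}(I-\Plant\Cont_s)\inv\bigr)-NY$, and invokes Theorem~\ref{thm:GenTFDiv} for the stability of $\gen_{ij}(I-\Plant\Cont_s)\inv$; your version with the unit $\Delta$ is the same computation before normalization.

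The sufficiency direction, however, is not proved: you reduce to the existence of a robustly regulating controller $\Cont_r$ for the stable numerator $N$ via Theorem~\ref{thm:SolvabilityStable}, correctly identify the lifting of $\Cont_r$ to a controller for $\Plant=ND\inv$ as the crux, and then stop, describing only what you \emph{would} search for. That missing step is the entire content of this half of the theorem. The paper resolves it with an explicit two-stage controller: take \emph{any} stabilizing controller $\Cont_s=X\inv Y$ with $XD-YN=I$ and set $\Cont:=\Cont_s+X\inv\Cont_r$. One then checks directly that $\Cont=(D_RX)\inv(D_RY+N_R)$ with $D_R=(I-\Cont_rN)\inv$ and $N_R=D_R\Cont_r$ satisfies $D_RXD-(D_RY+N_R)N=I$, so $\Cont$ stabilizes $\Plant$, and that $\gen_{ij}(I-\Plant\Cont)\inv=\gen_{ij}(I-N\Cont_r)\inv(I-\Plant\Cont_s)\inv$ is stable (likewise after right multiplication by $\Plant$), whence Theorem~\ref{thm:GenTFDiv} applies. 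Note also that your stated target, namely a controller whose sensitivity equals $\mathcal A=(I-N\Cont_\num)\inv$, is stronger than necessary and is not what the construction achieves: the sensitivity of the lifted loop is the \emph{product} $(I-N\Cont_r)\inv(I-\Plant\Cont_s)\inv$, and it suffices that $\gen_{ij}$ times this product be stable, which follows because the second factor is stable on its own. Without exhibiting this (or an equivalent) construction, the claimed equivalence is only established in one direction.
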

\begin{proof}
Since $\Plant=ND\inv$ is a right coprime factorization, Lemma 8.3.2 of Vidyasagar\cite{Vidyasagar} and its proof imply that any stabilizing controller can be written in the form $\Cont_s=X\inv Y$ where $X,Y\in\matrices{\stable}$ are such that 
\begin{eqnarray}\label{eq:Coprime}
I=XD-YN
\end{eqnarray}
and $\det(X)\neq 0$. If $\Cont_s=X\inv Y$ satisfying \eqref{eq:Coprime} solves the robust regulation problem, then a direct calculation shows that
$$
I=(I-\Plant\Cont_s)\inv-\Plant\Cont_s(I-\Plant\Cont_s)\inv=\gen_{ij}\inv\gen_{ij}(I-\Plant\Cont_s)\inv-NY,
$$
where $\gen_{ij}(I-\Plant\Cont_s)\inv\in\matrices{\stable}$. This implies necessity.

In order to show sufficiency, choose an arbitrary stabilizing controller $\Cont_s$ and let $\Cont_s=X\inv Y$ be its coprime factorization satisfying \eqref{eq:Coprime}. By Theorem \ref{thm:SolvabilityStable}, the equation \eqref{eq:SolvabilityCoprime} implies that there exists a robustly regulating controller $\Cont_r$ for $N$. Consider the controller 
$
\Cont:=\Cont_s+X\inv \Cont_r
$. 
The chosen controller has the left coprime factorization $\Cont=(D_RX)\inv(D_RY+N_R)$ where $D_R=(I-\Cont_rN)\inv$ and $N_R=D_R\Cont_r$ are stable matrices since $\Cont_r$ stabilizes $N$. To verify this, observe that
\begin{eqnarray*}
D_RX D-(D_RY+N_R)N=D_R(XD-YN)-N_RN=D_R-D_R\Cont_r N=D_R(I-\Cont_r N)=I.
\end{eqnarray*}
This implies that $\Cont$ stabilizes $\Plant$. In addition, 
$
\gen_{ij}(I-\Plant\Cont)\inv=\gen_{ij}(I-N\Cont_r)\inv(I-\Plant\Cont_s)\inv\in\matrices{\stable}
$  
since $\Cont_r$ robustly regulates $N$ and $\Cont_s$ stabilizes $\Plant$. The above matrix remains stable if it is multiplied by $P$, so $\Cont$ is robustly regulating by Theorem \ref{thm:GenTFDiv}.
\end{proof}

The proof of the above theorem leads to the following design procedure. It is not specified how the robust controller for the stable transfer matrix in the last step is found since it can be done in various ways, e.g., by using Design procedure \ref{des:DesignStable} or the simple method applied in Example \ref{exa:HeatEq}.

\begin{design}\label{des:DesignCoprime}
Define the controller 
$
\Cont=\Cont_s+X\inv \Cont_r
$ 
where $\Cont_s=X\inv Y$ and $\Cont_r$ are chosen in the following way:
\begin{enumerate}[\bf Step 1:]
\item Find a right coprime factorization $\Plant=ND\inv$ of the plant and find a stabilizing controller $\Cont_s=X\inv Y$ by solving the equation $XD-YN=I$.
\item Find a robustly regulating controller $\Cont_r$ for $N$.
\end{enumerate}
\end{design}

\begin{remark}
In the above design procedure, the solvability of the robust regulation problem is verified in the last step when trying to construct a robustly regulating controller for the numerator matrix. If such a controller exists, then the problem is solvable and otherwise not. 
\end{remark}

\subsection{Case IV: Simple Signal Generators}

As the final result it is shown that the classical solvability condition of Theorem 7.5.2 in Vidyasagar \cite{Vidyasagar} and the related controller design method are applicable in the general framework provided that the internal model is captured by a single element with a weakly coprime factorization. In such a case, the parametrization of all stabilizing controllers can be applied to obtain a parametrization of all robustly regulating controllers.

\begin{theorem}\label{thm:SolvabilitySimpleGen}
Assume that there exists $\gen\in\fractions{\stable}$ with a weakly coprime factorization $\gen=\frac{n}{d}$ such that it satisfies the equality $\langle\gen\rangle=\langle \gen_{ij}\, |\, 1\leq i\leq n, 1\leq j\leq q\rangle$. Then the robust regulation problem is solvable if and only if $\Plant_0=d\inv\Plant$ is stabilizable and there exist $A,B\in\matrices{\stable}$ such that
\begin{eqnarray}\label{eq:SolvabilitySimpleGen}
I = d A+\Plant B.
\end{eqnarray}
Provided that the robust regulation problem is solvable, a controller $\Cont$ solves it if and only if the contoller is of the form $\Cont=d\inv \Cont_0$ where $\Cont_0$ stabilizes $\Plant_0=d\inv\Plant$.
\end{theorem}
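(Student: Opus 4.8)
The plan is to prove the controller characterization first and then read off the solvability statement from it. The whole argument rests on the observation that the substitution $\Cont=d\inv\Cont_0$, $\Plant_0=d\inv\Plant$ leaves the loop products invariant, $\Plant\Cont=\Plant_0\Cont_0$ and $\Cont\Plant=\Cont_0\Plant_0$, so the four blocks of $\Closedloop{\Plant_0,\Cont_0}$ are exactly those of $\Closedloop{\Plant,\Cont}$ up to a scalar factor $d^{\pm1}$; concretely $(I-\Plant_0\Cont_0)\inv=(I-\Plant\Cont)\inv$ and $(I-\Cont_0\Plant_0)\inv=(I-\Cont\Plant)\inv$, while $(I-\Plant_0\Cont_0)\inv\Plant_0=d\inv(I-\Plant\Cont)\inv\Plant$ and $\Cont_0(I-\Plant_0\Cont_0)\inv=d\,\Cont(I-\Plant\Cont)\inv$. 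Throughout I would use Theorem \ref{thm:IMPClassic}, which (since $\langle\gen\rangle=J$ is principal with weakly coprime $\gen=n/d$) says that a stabilizing $\Cont$ is robustly regulating if and only if $d\inv(I-\Plant\Cont)\inv$ and $d\inv(I-\Plant\Cont)\inv\Plant$ are stable.

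For the necessity direction, suppose $\Cont$ robustly regulates $\Plant$ and set $\Cont_0=d\Cont$. Three of the four blocks of $\Closedloop{\Plant_0,\Cont_0}$ are stable directly from stabilization of $\Plant$, and the remaining one, $d\inv(I-\Plant\Cont)\inv\Plant$, is stable by the internal-model part of Theorem \ref{thm:IMPClassic}; hence $\Cont_0$ stabilizes $\Plant_0$, and in particular $\Plant_0$ is stabilizable. In the same breath I would extract \eqref{eq:SolvabilitySimpleGen}: rearranging $I=(I-\Plant\Cont)\inv-\Plant\bigl(\Cont(I-\Plant\Cont)\inv\bigr)$ and multiplying the sensitivity by $d\inv$ gives $I=dA+\Plant B$ with $A=d\inv(I-\Plant\Cont)\inv$ and $B=-\Cont(I-\Plant\Cont)\inv$, both stable (the first by the internal model, the second by stabilization). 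This establishes the necessity of both solvability conditions.

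The substantive direction is the converse: given that $\Cont_0$ stabilizes $\Plant_0$ and that \eqref{eq:SolvabilitySimpleGen} holds, show $\Cont=d\inv\Cont_0$ robustly regulates $\Plant$. Writing $T_0=(I-\Plant_0\Cont_0)\inv$, $S_0=\Cont_0 T_0$, $U_0=T_0\Plant_0$ for the (stable) closed-loop maps of $(\Plant_0,\Cont_0)$, three blocks of $\Closedloop{\Plant,\Cont}$ are again immediate, and by Theorem \ref{thm:IMPClassic} it remains only to check stability of the two quantities $d\inv(I-\Plant\Cont)\inv=d\inv T_0$ (the internal model) and $\Cont(I-\Plant\Cont)\inv=d\inv S_0$ (the last closed-loop block). \textbf{This is the crux of the proof}: each carries a stray factor $d\inv$, and the point is that \eqref{eq:SolvabilitySimpleGen} is exactly the coprimeness that clears it. Inserting $I=dA+\Plant B$ on the right, I would compute $d\inv T_0=T_0 A+(d\inv T_0\Plant)B=T_0 A+U_0 B$, which is stable because $d\inv T_0\Plant=T_0\Plant_0=U_0$; and similarly $d\inv S_0=S_0 A+(d\inv S_0\Plant)B$, where $d\inv S_0\Plant=\Cont_0 U_0=S_0\Plant_0$ is stable by the push-through identity $\Cont_0 T_0\Plant_0=(I-\Cont_0\Plant_0)\inv-I$. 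With both quantities stable, $\Cont$ stabilizes $\Plant$ and Theorem \ref{thm:IMPClassic} then yields robust regulation.

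Finally, the solvability equivalence drops out: the problem is solvable precisely when some robustly regulating $\Cont$ exists, which by the controller characterization happens precisely when $\Plant_0$ is stabilizable and \eqref{eq:SolvabilitySimpleGen} holds — necessity from the second paragraph, sufficiency from the third. I expect the clearing of the $d\inv$ factors via \eqref{eq:SolvabilitySimpleGen} to be the only delicate step; everything else is block bookkeeping together with the standard push-through identity.
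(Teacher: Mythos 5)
Your proposal is correct and follows essentially the same route as the paper's proof: the same scalar substitution $\Cont=d\inv\Cont_0$, $\Plant_0=d\inv\Plant$ with the identical block bookkeeping for $\Closedloop{\Plant_0,\Cont_0}$ versus $\Closedloop{\Plant,\Cont}$, the same use of \eqref{eq:SolvabilitySimpleGen} to clear the stray $d\inv$ factors in $\Cont(I-\Plant\Cont)\inv$ and in the internal-model term, and the same appeal to Theorem \ref{thm:IMPClassic} to conclude robust regulation. The only cosmetic difference is that you derive the necessity of \eqref{eq:SolvabilitySimpleGen} by direct computation where the paper cites Theorem \ref{thm:SolvabilityNecessary}, which is the same calculation.
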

\begin{proof}
The necessity parts of the claims are shown first. If a robustly regulating controller exists, Theorems \ref{thm:IMPClassic} and \ref{thm:SolvabilityNecessary} show that \eqref{eq:SolvabilitySimpleGen} holds. It remains to show that if $\Cont$ solves the robust regulation problem then $\Cont_0=d\Cont$ stabilizes $\Plant_0$. Observe that
\begin{subequations}
\begin{eqnarray}
(I-\Plant_0\Cont_0)\inv &=&(I-\Plant\Cont)\inv\in\matrices{\stable},\label{eq:C0P0Stab1}\\
\Cont_0(I-\Plant_0\Cont_0)\inv &=&d\Cont(I-\Plant\Cont)\inv\in\matrices{\stable},\label{eq:C0P0Stab2}\\
(I-\Cont_0\Plant_0)\inv &=&(I-\Cont\Plant)\inv\in\matrices{\stable},\label{eq:C0P0Stab3}\\
(I-\Plant_0\Cont_0)\inv\Plant_0 &=&d\inv(I-\Plant\Cont)\inv\Plant\in\matrices{\stable},\label{eq:C0P0Stab4}
\end{eqnarray}
\end{subequations}
The stability in \eqref{eq:C0P0Stab4} follows since $\Cont$ is robustly disturbance rejecting for the signal generator $d\inv I$ by Theorems \ref{thm:GenTFDiv} and \ref{thm:IMPClassic}. The above equations imply that $\Cont_0$ stabilizes $\Plant_0$.

In order to show the sufficiency parts, it is shown that $\Cont=d\inv\Cont_0$ is stabilizing and robustly regulating for $\Plant$ if $\Cont_0$ stabilizes $\Plant_0$ and \eqref{eq:SolvabilitySimpleGen} holds. In order to show stability, observe that \eqref{eq:C0P0Stab1} and \eqref{eq:C0P0Stab3} hold since $\Cont_0$ stabilizes $\Plant_0$. In addition,
\begin{eqnarray*}
(I-\Plant\Cont)\inv\Plant & = & d(I-\Plant_0\Cont_0)\inv\Plant_0\in\matrices{\stable},
\end{eqnarray*}
and using \eqref{eq:SolvabilitySimpleGen} one observes that
\begin{eqnarray*}
\Cont(I-\Plant\Cont)\inv & = &d\inv\Cont_0(I-\Plant_0\Cont_0)\inv(dA+\Plant B)\\
 & = &\Cont_0(I-\Plant_0\Cont_0)\inv A+\Cont_0(I-\Plant_0\Cont_0)\inv\Plant_0 B\in\matrices{\stable}.
\end{eqnarray*}
This shows that $\Cont$ stabilizes $\Plant$. By the equation \eqref{eq:SolvabilitySimpleGen}, one has
\begin{eqnarray*}
d\inv I& = & d\inv (I-\Plant\Cont)\inv(dA+\Plant B)-d\inv (I-\Plant\Cont)\inv\Plant\Cont\\
& = & ((I-\Plant\Cont)\inv A+(I-\Plant_0\Cont_0)\inv\Plant_0 B)-(I-\Plant_0\Cont_0)\inv\Plant_0\,\Cont.
\end{eqnarray*}
Theorem \ref{thm:IMPClassic} implies that $\Cont$ solves the robust regulation problem.
\end{proof}

The signal generator is causal by assumption. It follows that the generating element $\gen$ in the above theorem must be causal and Lemma \ref{lem:WCoprimeFactCausality} implies that the denominator $d$ of its weakly coprime factorization is in $\stable\setminus\primeideal$. It follows that $\Plant_0$ is causal as a product of two causal elements. Theorem \ref{thm:ExistenceOfCausalControllers} implies that $\Plant_0$ has a causal stabilizing controller provided that it is stabilizable. This leads to the following corollary.

\begin{corollary}\label{cor:SimpleGenCausalCont}
The solvability condition presented in Theorem \ref{thm:SolvabilitySimpleGen} is necessary and sufficient for the existence of a causal robustly regulating controller.
\end{corollary}

The above theorem implies the following straightforward design method, where one first includes the internal model and then stabilizes the resulting system. The order in which the stabilization and the construction of an internal model is done is reversed when compared to the design procedures proposed above.

\begin{design}\label{des:SimpleGen}
Define the controller 
$
\Cont=d\inv\Cont_0
$ 
where $\Cont_0$ and $d$ are chosen in the following way:
\begin{enumerate}[\bf Step 1:]
\item Find $\gen\in\fractions{\stable}$ with a weakly coprime factorization $\gen=\frac{n}{d}$ such that
$
\langle \gen\rangle=\langle \gen_{ij} | 1\leq i\leq n, 1\leq j\leq q\rangle
$.

\item Check the solvability by solving the equation \eqref{eq:SolvabilitySimpleGen}. If this is not possible, end the procedure since the robust regulation problem is not solvable.
\item 
Find a stabilizing controller $\Cont_0$ for $\Plant_0=d\inv P$.
\end{enumerate}
\end{design}

\begin{remark}
It is important to notice that the procedure without Step 2 can produce a controller that is not a robustly regulating controller. E.g., in the extreme case with $\Gen_r=d\inv I$ where $d$ is a non-unit element of $\stable$ and $\Plant=0$, the equation \eqref{eq:SolvabilitySimpleGen} obviously has no solution, but $\Plant_0=d\Plant=0$ is stable already. Of course, solving the equation is not necessary if the solvability can be verified in some other way since the solution is not used in the design procedure. One possibility is to skip Step 2 in the first place and use Theorem \ref{thm:GenTFDiv} or \ref{thm:IMPClassic} as a final step to verify that the resulting controller is indeed robustly regulating.
\end{remark}

\begin{remark}
Finding $\gen$ with a coprime factorization such that $\langle \gen_{ij} | 1\leq i\leq n, 1\leq j\leq q\rangle\subset \langle \gen\rangle$ is straightforward. E.g., one can write $\gen_{ij}=\frac{n_{ij}}{d_ij}$ and set $\gen=\frac{1}{d}$ where $d=\prod_{1\leq i\leq n,\,  1\leq j\leq q} d_{ij}$. Then the controller constructed in the above design procedure is robustly regulating provided that \eqref{eq:SolvabilitySimpleGen} is satisfied. Again the internal model may be oversized, but this way one can apply the procedure even if $\langle \gen_{ij} | 1\leq i\leq n, 1\leq j\leq q\rangle$ is not originally principal or the generator of the principal fractional ideal does not possess a weakly coprime factorization.
\end{remark}

Theorem \ref{thm:SolvabilitySimpleGen} implies that all robust controllers are found by finding all the stabilizing controllers of $\Plant_0$. Applying the parametrization for all stabilizing controllers given by Theorem \ref{thm:Stability} yields the following parametrization of robustly regulating controllers. For a strictly proper plant $\Plant$ all the controllers given by the parametrization are causal. If $\Plant$ is not strictly proper, then some of the controllers may be non-causal, but Corollary \ref{cor:SimpleGenCausalCont} guarantees that some of the controllers are causal.

\begin{corollary}\label{cor:ParamOfRRC}
Assume that $\langle\gen\rangle=\langle \gen_{ij}\, |\, 1\leq i\leq n, 1\leq j\leq q\rangle$ where $\gen\in\fractions{\stable}$ has a weakly coprime factorization $\gen=\frac{n}{d}$ and that $\Cont$ solves the robust regulation problem. Denote $L_0:=\begin{bmatrix}d\inv((I-\Plant\Cont)\inv)^T & (\Cont(I-\Plant\Cont)\inv)^T\end{bmatrix}^T$ and $\widetilde{L}_0:=
\begin{bmatrix}
-d(I-\Cont\Plant)\inv\Cont & (I-\Cont\Plant)\inv
\end{bmatrix}$. Then all controllers solving the robust regulation problem are given by the parametrization
\begin{subequations}
\label{eq:Parametrization}
\begin{eqnarray}
\Cont(W) & =&\left(\Cont(I-\Plant\Cont)\inv+\widetilde{L}_0WL_0\right)\left((I-\Plant\Cont)\inv+\Plant\widetilde{L}_0WL_0\right)\inv\\
& = & \left((I-\Cont\Plant)\inv+\widetilde{L}_0WL_0\Plant\right)\inv\left((I-\Cont\Plant)\inv\Cont+\widetilde{L}_0WL_0\right)
\end{eqnarray}
\end{subequations}
where $W\in\stable^{(n+m)\times(n+m)}$ is such an element that $\det((I-\Plant\Cont)\inv+\Plant\widetilde{L}_0WL_0)\neq 0$ and $\det((I-\Cont\Plant)\inv+\widetilde{L}_0WL_0\Plant)\neq 0$.
\end{corollary}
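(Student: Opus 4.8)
The plan is to reduce the statement to the parametrization of stabilizing controllers already available in Theorem~\ref{thm:Stability} by passing to the modified plant $\Plant_0=d\inv\Plant$. The key fact is supplied by the last statement of Theorem~\ref{thm:SolvabilitySimpleGen}: under the stated hypotheses, a controller solves the robust regulation problem for $\Plant$ if and only if it is of the form $d\inv\Cont_0$ where $\Cont_0$ stabilizes $\Plant_0$. Hence $\Cont_0\mapsto d\inv\Cont_0$ is a bijection between the stabilizing controllers of $\Plant_0$ and the robustly regulating controllers of $\Plant$. Since $\Cont$ solves the robust regulation problem, the controller $\Cont_0:=d\Cont$ stabilizes $\Plant_0$ and can serve as the base point in Theorem~\ref{thm:Stability} applied to $\Plant_0$.

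First I would record the elementary identities coming from the fact that $d$ is a scalar and thus commutes with every matrix: $\Plant_0\Cont_0=\Plant\Cont$ and $\Cont_0\Plant_0=\Cont\Plant$, whence $(I-\Plant_0\Cont_0)\inv=(I-\Plant\Cont)\inv$ and $(I-\Cont_0\Plant_0)\inv=(I-\Cont\Plant)\inv$. Substituting $\Cont_0=d\Cont$ into the matrices $\widetilde{L}$ and $L$ of Theorem~\ref{thm:Stability} for the pair $(\Plant_0,\Cont_0)$ then yields exactly $\widetilde{L}=\widetilde{L}_0$ and $L=dL_0$, with the $\widetilde{L}_0,L_0$ of the corollary. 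These two computations are the crux, and I expect them to be the only place where care is needed, mainly in tracking where the scalars $d$ and $d\inv$ appear.

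With these identities in hand I would substitute into the first parametrization formula of Theorem~\ref{thm:Stability}. Because $d$ commutes with $W$, the factor $\widetilde{L}WL=d\,\widetilde{L}_0WL_0$, so the numerator of $\Cont_0(W)$ becomes $d\bigl(\Cont(I-\Plant\Cont)\inv+\widetilde{L}_0WL_0\bigr)$, while in the denominator the outer $\Plant_0=d\inv\Plant$ cancels the $d$ coming from $L$, giving $(I-\Plant\Cont)\inv+\Plant\widetilde{L}_0WL_0$. Multiplying the resulting $\Cont_0(W)$ by $d\inv$, as prescribed by the bijection above, cancels the remaining factor $d$ and produces precisely the first line of \eqref{eq:Parametrization}; the second line follows identically from the second formula of Theorem~\ref{thm:Stability}.

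Finally I would check that the admissibility conditions on $W$ transfer correctly. The determinant condition $\det\bigl((I-\Plant_0\Cont_0)\inv+\Plant_0\widetilde{L}WL\bigr)\neq 0$ from Theorem~\ref{thm:Stability} becomes $\det\bigl((I-\Plant\Cont)\inv+\Plant\widetilde{L}_0WL_0\bigr)\neq 0$ after the same cancellation, and the second condition becomes $\det\bigl((I-\Cont\Plant)\inv+\widetilde{L}_0WL_0\Plant\bigr)\neq 0$, since the $d$ from $L$ and the $d\inv$ from $\Plant_0$ cancel when $\Plant_0$ multiplies $L$ on the right. These match the conditions stated in the corollary, completing the argument. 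The only genuine content is the bijection from Theorem~\ref{thm:SolvabilitySimpleGen}; everything else is bookkeeping of the scalar $d$.
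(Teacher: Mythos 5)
Your proposal is correct and is exactly the argument the paper intends: the corollary is presented as a direct consequence of Theorem~\ref{thm:SolvabilitySimpleGen} (which gives the bijection $\Cont_0\mapsto d\inv\Cont_0$ between stabilizing controllers of $\Plant_0=d\inv\Plant$ and robustly regulating controllers of $\Plant$) combined with Theorem~\ref{thm:Stability} applied to $(\Plant_0,\Cont_0)$ with $\Cont_0=d\Cont$. Your bookkeeping of the scalar $d$ (in particular $\widetilde{L}=\widetilde{L}_0$, $L=dL_0$, and the cancellations in both factors and in the determinant conditions) checks out.
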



\section{Examples}\label{sec:Example}

In the first example, Design procedure \ref{des:DesignGeneral} is applied to construct a robustly regulating controller for a delay system.
The reference signals have complicated unstable dynamics which restricts the possible choices of the ring $\stable$ of stable transfer functions. This underlines the importance of the general approach.

\begin{example}\label{exa:Delay}
Let the given plant and the signal generator be
\begin{eqnarray*}
\Plant=\begin{bmatrix}
\frac{\e^{-2s}}{s} & 0 \\
\frac{4\e^{-4s}}{1+2s} & \frac{2\e^{-2s}}{1+4s}
\end{bmatrix}
\quad\text{and}\quad
\Gen_r=\begin{bmatrix}
\frac{1}{\e^{-s}-1} & 0\\
0 & \frac{1}{s}+\frac{1}{s^2+\pi^2}
\end{bmatrix}.
\end{eqnarray*}
The aim is to find a robustly regulating controller by applying Design procedure \ref{des:DesignGeneral}.

Before proceeding a suitable ring of stable transfer functions and the ideal characterizing causality are fixed. The ring is chosen to be $\stable=\pstable$. The reason for this is that the signal generator has infinitely many poles on the imaginary axis,
which poses some restriction on the stability type achievable. E.g., it is not possible to solve the proposed robust regulation problem if $\stable$ is chosen to be $\Hinfty$.\cite{LaakkonenPohjolainen2015} The ideal $\primeideal$ is chosen to be 
\begin{equation*}
\primeideal=\left\{f\in\pstable\,\, \middle|\,\, \lim_{\rho\to 0}\sup_{\substack{\mathrm{Re}(s)\geq 0\\|s|>\rho}}|f(s)|=0\right\}.
\end{equation*}
This mean that (strictly) causal stable elements are exactly the elements that are (strictly) proper in the sense of the definition given by Curtain and Morris\cite{CurtainMorris2009}. It is easy to verify that $\primeideal$ is an ideal, but it is more complicated to show that it is prime. However, this is not needed since the plant possess right and left coprime factorizations. This follows since the stabilizing controller is rational and has coprime factorizations. Then strictly causal plants have only causal stabilizing controllers due to the results by Vidyasagar et al.\cite{Vidyasagaretal1982} mentioned in Remark \ref{rem:AlternativeCausality}. Finding the actual coprime factorizations is unnecessary.


The only unstable element of the plant can be written in the form $\left(\frac{s}{s+1}\right)\inv\frac{\e^{-2s}}{s+1}$ where $\frac{s}{s+1}\in\stable\setminus\primeideal$ and $\frac{\e^{-2s}}{s+1}\in\primeideal$, i.e., this element is strictly causal. The other elements are clearly in $\primeideal$, so the plant transfer matrix is strictly causal. This means that the controller found in the procedure will be causal. Similar arguments show that the signal generator is causal. It is not strictly causal since its first diagonal element does not vanish at infinity.

\textbf{Step 1:} The signal generator is simplified first. The observation that $\frac{1}{\e^{-s}-1}\cdot \frac{\e^{-s}-1}{s}=\frac{1}{s}$ and $\frac{\e^{-s}-1}{s}\in\pstable$ leads to the equality
\begin{eqnarray}\label{eq:Exa1J}
\left\langle \frac{1}{\e^{-s}-1},0,0,\frac{1}{s}+\frac{1}{s^2+\pi^2}\right\rangle = \left\langle \frac{1}{\e^{-s}-1},\frac{1}{s^2+\pi^2}\right\rangle.
\end{eqnarray}
Denote $f_1=\frac{1}{\e^{-s}-1}$ and $f_2=\frac{1}{s^2+\pi^2}$. These elements have the fractional representations $f_j=\frac{n_j}{d_j}$ where $n_1=1$, $d_1=\e^{-s}-1$, $n_2=\frac{1}{(s+1)^2}$, and $d_2=\frac{s^2+\pi^2}{(s+1)^2}$ are elements of $\pstable$.

The calculations of Example \ref{exa:Simplification} show that the internal model is captured by the single element $\frac{(s+1)^2}{(\e^{-s}-1)(s^2+\pi^2)}$ also with $\stable=\pstable$. However, two elements are used in order to illustrate the matrix equation \eqref{eq:SolvabilityMatrix}. This does not lead to an oversized internal model since the chosen elements $f_1$ and $f_2$ do not have common unstable dynamics, i.e., unstable poles, unlike the original elements of the signal generator.

\textbf{Step 2:} The plant has a first order pole at $s=0$. This means that the stabilized closed loop is likely to have a zero at this pole appearing also in the signal generator. This would lead to a situation where no robust controller in the next step is available since the stabilized plant would block some unstable dynamics and that is not allowed by Theorem \ref{thm:SolvabilityNecessary}. The part of the internal model containing this pole is therefore built into the controller already at this step. This can be done using the PI-controller
\begin{eqnarray*}
\Cont_s=-\frac{1}{16}\left(4+\frac{1}{s}\right)I
\end{eqnarray*}
having the desired pole at the origin. This leads to
\begin{eqnarray*}
U&=(I-\Plant\Cont_s)\inv= 
\begin{bmatrix}
\frac{16s^2}{16s^2+(4s+1)\e^{-2s}} & 0\\
\frac{32s^2(4s+1)\e^{-4s}}{(2s+1)(8s+\e^{-2s})(16s^2+(4s+1)\e^{-2s})} & \frac{8s}{8s+\e^{-2s}}
\end{bmatrix}, 
\end{eqnarray*}
$V=U\Cont_s=-\frac{4s+1}{16s}U$, $\widetilde{L}=\begin{bmatrix}-V & U\end{bmatrix}$, and $L=\begin{bmatrix}U\\ V\end{bmatrix}$. The parameters of the controller are chosen so that the plant is stabilized and the resulting matrices are as simple as possible. Naturally, this step can be carried out by using other types of controllers as well.

\textbf{Step 3:} It remains to choose $W$ and $\widetilde{A}_j$, $j\in\{1,2\}$, that solve \eqref{eq:SolvabilityMatrix}. The internal model should reproduce the poles of the signals generator. The pole at zero is already included in the stabilizing controller $\Cont_s$. Thus, $W$ should be chosen so that it captures the remaining poles of the signal generator. After choosing $W$ one should be able to choose $\widetilde{A}_j$ appropriately, which shows that the controller has an internal model of the signal generator.

To this end, a controller $\Cont_r$ containing all the unstable poles of the signal generator except the one at the origin is introduced. The controller is chosen to be of the form $\sum_k \frac{\varepsilon}{s-\omega_k}M_k$ where $\omega_k$ are the non-zero poles of the signal generator. It was shown by Laakkonen and Pohjolainen\cite{LaakkonenPohjolainen2015} that aligning $M_k$ appropriately with the stable plant to be regulated at each pole and choosing small enough gain $\varepsilon$ results to a robustly regulating controller. This in mind the controller
\begin{eqnarray*}
\Cont_r& =
\sum_{n\in\{-1,1\}}\frac{\varepsilon}{s+ \pi n\i}\left(P(\pi n\i)U(\pi n\i)\right)\inv
+\sum_{n\in\Z\setminus\{0\}}\frac{\varepsilon}{n^2(s- 2\pi n\i)}\left(P(2\pi n\i)U(2\pi n\i)\right)\inv\\
& = \varepsilon\begin{bmatrix}
\frac{4s+1-16\pi^2}{8(s^2+\pi^2)} & 0\\ \frac{4\pi^2(2s+1+8\pi^2)}{(1+4\pi^2)(s^2+\pi^2)} & \frac{12s+1-32\pi^2}{8(s^2+\pi^2)}
\end{bmatrix}+\varepsilon\sum_{n=1}^{\infty}\begin{bmatrix}
\frac{4s+1-64\pi^2n^2}{8n^2(s^2+(2\pi n)^2)} & 0\\ \frac{16\pi^2(2s+1+32\pi^2n^2)}{(s^2+(2\pi n)^2)(1+16\pi^2n^2)} & \frac{12s+1-128\pi^2n^2}{8n^2(s^2+(2\pi n)^2)}
\end{bmatrix}.
\end{eqnarray*}
that stabilizes $U\Plant$ is chosen. The series converges outside the poles since the numerator is of second order with respect to $n$ whereas the denominator is of fourth order. It follows that the matrix
\begin{eqnarray*}
W=\begin{bmatrix}
0_{2\times 2} & 0_{2\times 2}\\
\Cont_r(I-U\Plant\Cont_r)\inv & 0_{2\times 2}
\end{bmatrix}
\end{eqnarray*}
is stable over $\pstable$. Choose
\begin{eqnarray*}
\widetilde{A}_j=f_j(I-U\Plant\Cont_r)\inv\begin{bmatrix}
U & -U\Plant
\end{bmatrix}
\end{eqnarray*}
where $j\in\{1,2\}$.
An analysis similar to that in Section 5.3 of Laakkonen and Pohjolainen \cite{LaakkonenPohjolainen2015} shows that $\widetilde{A}_2=f_2(I-U\Plant\Cont_r)\inv{\in}\matrices{\pstable}$ and $\frac{s}{(\e^{{-}s}-1)(s+1)}(I-U\Plant\Cont_r)\inv{\in}\matrices{\pstable}$. In addition, a direct calculation shows that $\frac{s+1}{s}\begin{bmatrix}U & -U\Plant\end{bmatrix}\in\matrices{\pstable}$. Thus, $\widetilde{A}_1$ is stable, i.e.,
\begin{eqnarray*}
\widetilde{A}_1=\frac{s}{(\e^{-s}-1)(s+1)}(I-U\Plant\Cont_r)\inv\cdot\frac{s+1}{s}\begin{bmatrix}
U & -U\Plant
\end{bmatrix}\in\matrices{\pstable}.
\end{eqnarray*} 

It remains to show that the chosen matrices $W$ and $\widetilde{A}_j$ satisfy the matrix equation \eqref{eq:SolvabilityMatrix} or equivalently the equations \eqref{eq:Solvability}. To that end, calculate
\begin{eqnarray}\label{eq:LWL}
\widetilde{L} WL &
= U \Cont_r(I-U\Plant \Cont_r)\inv U
\end{eqnarray} 
from which it follows that
\begin{subequations}\label{eq:Exa1CLWL1}
\begin{eqnarray}
\left(I+\begin{bmatrix}
\Plant\widetilde{L}\\
\widetilde{L}
\end{bmatrix} W\right)L &
= & \begin{bmatrix}
U\\
\Cont_s U
\end{bmatrix}
+
\begin{bmatrix}
\Plant U \Cont_r (I- U\Plant\Cont_r)\inv U\\
U\Cont_r (I-U\Plant\Cont_r)\inv U
\end{bmatrix}\\
& = &\begin{bmatrix}
I-\Plant U\Cont_r + U\Plant \Cont_r\\
\Cont_s(I-U\Plant \Cont_r)+U\Cont_r
\end{bmatrix}(I-U\Plant\Cont_r)\inv U\\
& = &\begin{bmatrix}
I\\
\Cont_s+\Cont_r
\end{bmatrix}(I-U\Plant\Cont_r)\inv U
\end{eqnarray} 
\end{subequations}
where the property $U\Plant =\Plant U$ was used. In addition, one can write
\begin{subequations}\label{eq:Exa1CLWL2}
\begin{eqnarray}
\left(d_j \widetilde{A}_j -\begin{bmatrix}n_j I & 0\end{bmatrix}\right) & = & n_j (I-U\Plant\Cont_r)\inv U
\begin{bmatrix}I-U\inv (I-U\Plant\Cont_r) & -\Plant\end{bmatrix} \\
& = & n_j (I-U\Plant\Cont_r)\inv U \Plant
\begin{bmatrix}\Cont_s +\Cont_r & -I\end{bmatrix}.
\end{eqnarray}
\end{subequations}
Showing that \eqref{eq:Solvability} holds can be done by substituting \eqref{eq:Exa1CLWL1} and \eqref{eq:Exa1CLWL2} into it.

Substitute $U$, $V$, and \eqref{eq:LWL} into \eqref{eq:RobCont} to obtain
\begin{eqnarray*}
\Cont & = &\left(\Cont_s U+U \Cont_r(I-U\Plant \Cont_r)\inv U\right)
\left(U+ U\Plant \Cont_r (I-U\Plant \Cont_r)\inv U\right)\inv\\
& = & \left(\Cont_s +U \Cont_r(I-U\Plant \Cont_r)\inv \right)
\left(I+ U\Plant \Cont_r (I-U\Plant \Cont_r)\inv \right)\inv\\
& = & \left(\Cont_s +U \Cont_r(I-U\Plant \Cont_r)\inv \right)
(I-U\Plant \Cont_r)\\
& = & \Cont_s+U(I-\Plant\Cont_s)\Cont_r = \Cont_s+\Cont_r.
\end{eqnarray*}
Thus, the constructed robustly regulating controller is
\begin{eqnarray*}
&\Cont  &=-\frac{1}{16}\left(4+\frac{1}{s}\right)I +\varepsilon\begin{bmatrix}
\frac{4s+1-16\pi^2}{8(s^2+\pi^2)} & 0\\ \frac{4\pi^2(2s+1+8\pi^2)}{(1+4\pi^2)(s^2+\pi^2)} & \frac{12s+1-32\pi^2}{8(s^2+\pi^2)}
\end{bmatrix}\\
&&\qquad\qquad\qquad\qquad+\varepsilon\sum_{n=1}^{\infty}\begin{bmatrix}
\frac{4s+1-64\pi^2n^2}{8n^2(s^2+(2\pi n)^2)} & 0\\ \frac{16\pi^2(2s+1+32\pi^2n^2)}{(s^2+(2\pi n)^2)(1+16\pi^2n^2)} & \frac{12s+1-128\pi^2n^2}{8n^2(s^2+(2\pi n)^2)}
\end{bmatrix}.
\end{eqnarray*}
Since the equation \eqref{eq:Simplification} holds with the choice $\stable=\pstable$, the fractional ideal \eqref{eq:Exa1J} is principal with the generator $\gen=\frac{(s+1)^2}{(\e^{-s}-1)(s^2+\pi^2)}$. Furthermore, $\gen$ has the coprime factorization $\frac{1}{d}$ where $d=\frac{(\e^{-s}-1)(s^2+\pi^2)}{(s+1)^2}\in\pstable$, so all robustly regulating controllers are given by the parametrization \eqref{eq:Parametrization}. All of them are causal since the plant is strictly causal. 
\end{example}


In the following example, Design procedure 4 is applied to calculate a finite-dimensional robust controller for a heat equation. The example is particularly important since it demonstrates that the design procedure may be carried out using standard techniques and without calculating a closed form expression of the plant transfer matrix.



\begin{example}\label{exa:HeatEq}
Consider the heat equation
\begin{eqnarray*}
\frac{\partial z}{\partial t}(t,x)& = &\frac{\partial^2 z}{\partial x^2}(t,x),
\qquad   z(0,x) =z_0(x)\\
\frac{\partial z}{\partial x}(t,0) & =& -u_1(t),\qquad  
 \frac{\partial z}{\partial x}(t,1) =u_2(t)\\
y_1(t) & = &4\int_0^{\frac{1}{4}} z(t,x)\, dx,\qquad   
 y_2(t) =\int_\frac{1}{2}^{\frac{3}{4}} z(t,x)\, dx,
\end{eqnarray*}
on the unit interval. The measurements $y_1(t)$ and $y_2(t)$ should asymptotically track the reference signals $y_{ref}^1(t)=\sin(2t)+1$ and $y_{ref}^2(t)=\cos(2t)$, respectively.
A suitable choice for the ring of stable transfer functions is $\stable=\Hinfty$. In what follows, the controller parameters of Design procedure \ref{des:DesignCoprime} are chosen.

\textbf{Step 1.} The Laplace transforms of the reference signals are $\hat{y}_1(s)=\frac{1}{s^2+4}+\frac{1}{s}$ and $\hat{y}_2(s)=\frac{s}{s^2+4}$. The poles of the signals locate at $s=0$, $s=2\i$, and $s=-2\i$. This is the minimal set of poles the controller should have. Since the poles $s=\pm 2\i$ appearing in both reference signals are simple, it is sufficient that the controller has only first order poles at these locations. This information is sufficient for constructing the internal model in Step 3, so the signal generator need not be given explicitly.

\textbf{Step 2.} A stabilizing controller and its left coprime factorization are found next. The eigenvalues of the plant are $\lambda_n=\pi^2+1-n^2\pi^2$, $n=0,1,\ldots$ The system has no other spectrum points and there are only two unstable eigenvalues $\lambda_0=\pi^2+1$ and $\lambda_1=1$. 
The finite-dimensional controller is defined as
\begin{eqnarray*}
\dot{v} &=& M v+Hy\\
u &=& Fv
\end{eqnarray*}
where
\begin{eqnarray*}
M=\begin{bmatrix}
-200 & -150 \\
 0 & -50 
\end{bmatrix}, \quad 
H=\begin{bmatrix}
120 & 0 \\
1 & -1
\end{bmatrix}, \quad \text{and}\quad
F=\begin{bmatrix}
-50 & -75 \\
-50 &  75
\end{bmatrix}
\end{eqnarray*}
are found by using the techniques presented in Curtain and Salamon \cite{CurtainSalamon1986}. The approximated stability margin with this controller is $-1.70$. The transfer function of the stabilizing controller is 
\begin{eqnarray*}
\Cont_s(s)=F(sI-M)\inv H=\frac{1}{s^2 + 250 s + 10000}\begin{bmatrix}
-6075 s {-} 3.075\cdot 10^5 & -5925 s {-} 2.775\cdot 10^5\\
75 s {+} 7500 & -75 s {-} 22500
\end{bmatrix}.
\end{eqnarray*}
Its left coprime factorization $\Cont_s=X\inv Y$ can be found by using standard methods.\cite{OaraVarga1999} Here it is found by using the Matlab function \texttt{lncf}. Only the denominator matrix
\begin{eqnarray*}
X(s)=\frac{1}{s^2 + 8605 s + 9.911\cdot 10^5}\begin{bmatrix}
s^2 + 4324 s + 2.094\cdot 10^5 & -4108 s - 1.838\cdot 10^5\\
-4108 s - 2.063\cdot 10^5 & s^2 + 4531 s + 2.284\cdot 10^5
\end{bmatrix}
\end{eqnarray*}
is needed. Since the plant can be stabilized by a controller having a coprime factorization, it has a coprime factorization as well and the numerator of the right-coprime factorization needed in the next step is formally given by $N=P(I-C_sP)\inv X\inv$.

\textbf{Step 3.} Next a robustly regulating controller is constructed for the stable transfer matrix $N$. One such controller is given by
\begin{eqnarray*}
C_r(s)=-\varepsilon\left(\frac{N\inv(0)}{s}+\frac{N\inv(2\i)}{s-2\i}+\frac{N\inv(-2\i)}{s+2\i}\right)
\end{eqnarray*}
where $\varepsilon>0$ is to be chosen appropriately small.\cite{HamalainenPohjolainen2000,RebarberWeiss2003} The idea behind this controller is exactly the same as that of the controller in Step 3 of Example \ref{exa:Delay}. This verifies the solvability of the robust regulation problem. The designed controller is not only robust to the small perturbations in the plant, but also to small changes in the controller as long as the controller contains the internal model. Thus, it is possible to replace $P$ in $N=P(I-C_sP)\inv X\inv$ by an approximated system transfer function. This way one avoids calculation of the explicit plant transfer function. Here the plant is approximated using finite differences with ten points on $[0,1]$. Finally, $N\inv(0)$, $N\inv(2\i)$ and $N\inv(-2\i)$ are calculated and their elements are rounded to one decimal. Choosing $\varepsilon=0.5$ yields the robust controller
\begin{eqnarray*}
C_r(s)=\frac{1}{s^3 + 4 s}
\begin{bmatrix}
-0.9 s^2 + 0.4 s - 1.2 &-1.05 s^2 - 0.4 s - 1.4\\
0.9 s^2 - 0.2 s + 1.2 & -0.65 s^2 + 0.2 s - 1
\end{bmatrix}
\end{eqnarray*}
of the numerator $N$.
The robust controller
\begin{eqnarray*}
\Cont(s)=\Cont_s(s)+X\inv(s)\Cont_r(s).
\end{eqnarray*}
is obtained by substituting the above transfer matrices. This controller is causal since it is a proper rational transfer matrix. A minimal realization of $\Cont$ is found using the Matlab function \texttt{minreal}. The closed-loop system has stability margin $0.4132$. The eigenvalues closest to the imaginary axis are plotted in Fig. \ref{fig:Eigenvalues}.
\begin{figure}[ht]
\centering
\includegraphics[width=0.6\textwidth]{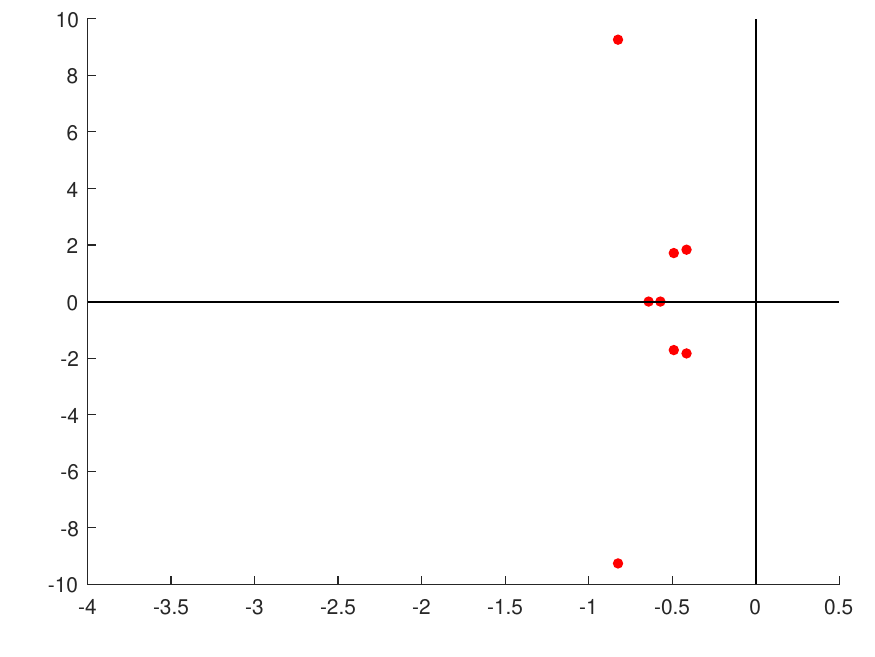}
\caption{The closed loop eigenvalues with controller $\Cont(s)$ in Example \ref{exa:HeatEq}.}
\label{fig:Eigenvalues}
\end{figure}
There are three pairs of eigenvalues in the figure two units or less away from the imaginary axis. These are the eigenvalues corresponding to the poles of $\Cont_r$. They are moving to the left from the imaginary axis when increasing $\varepsilon$ from zero to $0.5$. The remaining eigenvalues shown in the figure are the rightmost eigenvalues of the stabilized closed-loop system of Step 2 that move to the right as $\varepsilon$ is increased. Thus, the stability margin obtained with the proposed choice of $\varepsilon$ is nearly optimal with the stabilizing controller constructed in Step 2. In comparison, the stability margin obtained by using the actual closed-loop transfer matrix $\Plant$ instead of the approximated one when constructing $\Cont_r$ is $0.4588$.

Finally, the closed loop system is simulated. The approximation in the simulation is obtained by using finite differences with 150 points on $[0,1]$. Fig. \ref{fig:Regulation} shows the behavior of the measured outputs. As expected, the outputs converge asymptotically to the reference signals. The oscillation is mainly due to the eigenvalues with the largest imaginary parts shown in Fig. \ref{fig:Eigenvalues}.
\begin{figure}[ht]
\centering
\includegraphics[width=0.6\textwidth]{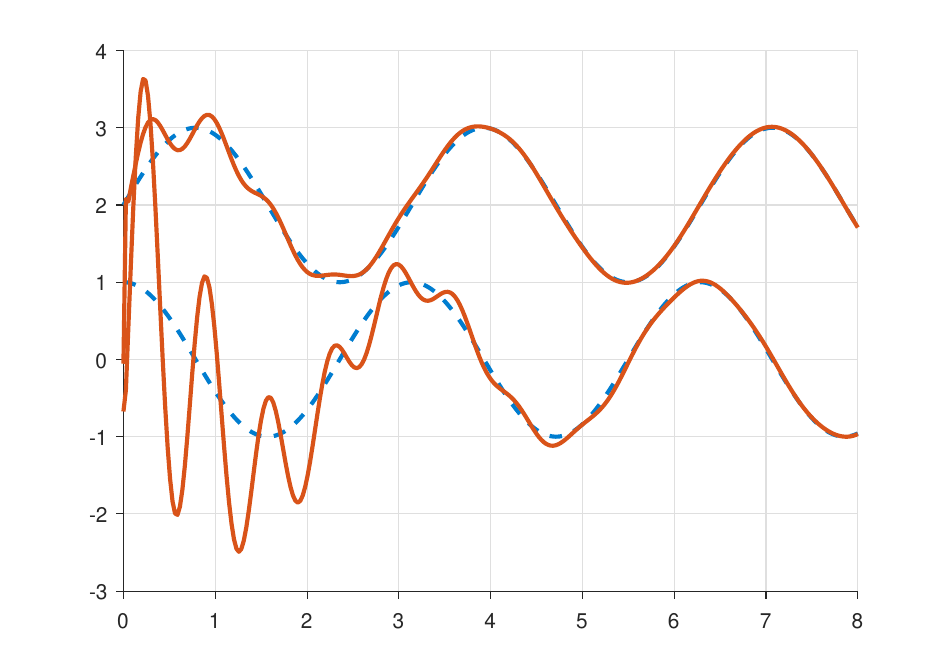}
\caption{The measured signals (solid) and the reference signals (dashed) of Example \ref{exa:HeatEq}.}
\label{fig:Regulation}
\end{figure}
\end{example}

\section{Concluding Remarks}\label{sec:Conclusions}

This article introduced general frequency domain theory for robust regulation using fractional representations. The main theoretical contributions were the new formulation of the internal model principle, several conditions for solvability, and the parametrization of all robustly regulating controllers. Causality considerations were included. The usefulness of the results is due to the generality assured by the minimal set of standing assumptions and not requiring the existence of coprime factorizations. Unlike the results that are related to specific rings of stable transfer functions\cite{LaakkonenPohjolainen2015, Vidyasagar}, the results presented in this article allow one to choose the stability type to work with. This is particularly important since the achievable stability type depends on the problem at hand 
as was demonstrated by Example \ref{exa:Delay} in which the choice of the ring of stable transfer functions was not trivial due to the challenging unstable dynamics of the reference signals. 


The given conditions for solvability were accompanied by design procedures for robust controllers. Although it was not possible to give details on how to accomplish the steps of the design procedures
due to the general approach, comparing the procedures reveals some main ideas. First, some of the procedures start with a step where the internal model is simplified. This step is compulsory in Design procedure \ref{des:SimpleGen} and is particularly important in  
the other procedures as well since without it the constructed internal model tends to be oversized, see Remark \ref{rem:SimplificationInDesign}. Secondly, Design procedure \ref{des:DesignStable} 
gives a recursive process to construct the internal model into the controller. This technique enables one to revise an existing robustly regulating controller by adding an internal model of new unstable dynamics so that it can handle a larger class of reference signals. Thirdly, a two-step approach where one first stabilizes the plant and then constructs an internal model into the controller was used in Design procedures \ref{des:DesignGeneral}-\ref{des:DesignCoprime}.
This may be particularly handy 
since finding a robust controller for a stable plant can be straightforward as was seen in Example \ref{exa:HeatEq}. In Design procedure \ref{des:SimpleGen}, the order of stabilization and construction of internal model was reversed. Adding the internal model first and then stabilizing the resulting system is a straightforward method, but the downside is that one needs to stabilize the unstable dynamics of the plant and the signal generator at once.




The results of this article help in understanding some of the fundamental ideas in robust regulation such as the internal model principle. Whereas the results generalize
the existing ones that are specific to some rings of stable transfer functions, they now provide a good starting point to go back from general to specific.
In particular, several results require finding a generating set for a specific fractional ideal or solving matrix equations such as \eqref{eq:IMP} or \eqref{eq:Solvability}. These are not easy tasks in general and an interesting direction for future research would be to find out what one can say about their solvability in some of the most general rings of stable transfer functions such as $\Hinfty$. On the other hand, one can try to find alternative formulations of the main results in order to obtain new insights. This has been done with SISO 
systems using fractional ideals.
\cite{LaakkonenQuadrat2015}
Two prominent frameworks for achieving further insights into robust regulation of MIMO 
systems are the lattice approach or the geometric systems theory.\cite{Quadrat2006, Falb1999} The results concerning causality were not complete for causal plants. Therefore, stronger results on causal stabilizing controllers such as parametrization of all causal controllers would be of great interest. Parametrizations for strictly causal controllers are already available\cite{Mori2009}.

\bibliographystyle{plain}

\end{document}